\newtheorem{thm}{Theorem}
\newtheorem{lem}{Lemma}
\newtheorem{cor}{Corollary}
\theoremstyle{definition}
\theoremstyle{remark}
\numberwithin{equation}{section}
\newcommand{\R}{\mathbb{R}}  
\begin{document}

\title[Weighted Birkhoff ergodic theorem]{Weighted Birkhoff ergodic theorem with oscillating weights }

\author{Ai-hua Fan}
\address{LAMFA, UMR 7352 CNRS, University of Picardie, 33 rue Saint Leu,80039 Amiens, France}
\email{ai-hua.fan@u-picardie.fr}
\thanks{}
\maketitle






\begin{abstract} We consider sequences of Davenport type or Gelfond type and prove that sequences of Davenport exponent larger than $\frac{1}{2}$ are good sequences of weights for   the ergodic theorem, and that the ergodic sums weighted by a sequence of
strong Gelfond property is well controlled almost everywhere. We  prove that for any $q$-multiplicative sequence, the Gelfond property implies the strong Gelfond property and that sequences realized by 
dynamical systems can be fully oscillating and have the Gelfond property. 

\end{abstract}



\tableofcontents


\section{Introduction}
A sequence of complex numbers $(w_n)_{n \ge 0} \subset \mathbb{C}$ will be considered
as weights for Birkhoff averages. Oscillating weights of higher order were defined in \cite{F} and oscillating weights of order $1$ appeared earlier in \cite{FJ}. 
Recall that the sequence $(w_n)$ is defined to be {\em oscillating of order $d$} ($d \ge  1$) if for any real polynomial $P$ of degree less than or equal to d we have
$$
    \lim_{N\to \infty} \frac{1}{N}\sum_{n=0}^{N-1} w_n e^{ 2\pi i P(n)} =0.
$$
A {\em fully oscillating sequence} is defined to be an oscillating sequence of all orders. It was proved in \cite{FJ} that if $(w_n)$ is oscillating (i.e. oscillating of order $1$), then the following weighted ergodic limit
\begin{equation} \label{SarnakConj}
   \lim_{N\to \infty} \frac{1}{N}\sum_{n=0}^{N-1} w_n f(T^n x) =0
\end{equation}
exists for any $f \in C(X)$ and any $x \in X$, where $(X,T)$ is some topological dynamical system of zero entropy, for example, an arbitrary homeomorphism of the circle \cite{FJ}. This is related to Sarnak's conjecture \cite{Sarnak, Sarnak2}.
  The following result on everywhere convergence of multiple ergodic limit was proved in \cite{F2}  under the  assumption of full oscillation (only a higher order oscillation suffices for certain dynamics).
  
  \begin{thm} [\cite{F2}] \label{MET} Let $\ell \ge  1$ be an integer.
  Suppose that $(w_n) $ is fully oscillating and that $Tx = Ax +b$ is an affine linear map of zero entropy on a compact abelian group $X$. Then
  the following weighted multiple ergodic limit exists:
\begin{equation}\label{WMEL}
    \lim_{N\to \infty} \frac{1}{N}\sum_{n=0}^{N-1} w_n f_1(T^{q_1(n)} x) \cdots  f_\ell(T^{q_\ell(n)} x) =0,
\end{equation}
for any $f_1, \cdots, f_\ell \in C(X)$,  any polynomial $q_1, \cdots, q_\ell  \in \mathbb{Z}[z]$  such that $q_j(\mathbb{N})\subset \mathbb{N}$
($1\le j\le \ell $) and any $x \in X$.
\end{thm}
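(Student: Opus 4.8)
The plan is to reduce the general continuous functions $f_j$ to characters of the compact abelian group $X$, to exploit the zero-entropy hypothesis in order to rewrite the dynamical factor $\prod_{j=1}^{\ell}f_j(T^{q_j(n)}x)$ as a finite sum of exponentials of real polynomials supported on arithmetic progressions, and finally to invoke the full oscillation of $(w_n)$.

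First, the weights $(w_n)$ being bounded (as is customary in this setting), the multilinear forms $(f_1,\dots,f_\ell)\mapsto\frac{1}{N}\sum_{n=0}^{N-1}w_n\prod_{j}f_j(T^{q_j(n)}x)$ are equibounded on $C(X)^\ell$, uniformly in $N$; hence, by Stone--Weierstrass and Pontryagin duality, it suffices to establish \eqref{WMEL} when each $f_j$ is a character $\gamma_j\in\widehat X$. Any factor for which $\gamma_j$ is trivial can then be discarded, and if all of them are trivial one is left with $\frac{1}{N}\sum_{n<N}w_n\to 0$, which holds because $(w_n)$ is oscillating of order $1$. Writing $T^kx=A^kx+b_k$ with $b_k=\sum_{i=0}^{k-1}A^ib$, and letting $\widehat A$ denote the dual automorphism of the discrete group $\widehat X$, one has $\gamma_j(T^kx)=(\widehat A^{\,k}\gamma_j)(x)\cdot\prod_{i=0}^{k-1}(\widehat A^{\,i}\gamma_j)(b)$, so the whole problem is governed by the single $\widehat A$-orbit $k\mapsto\widehat A^{\,k}\gamma_j$ in $\widehat X$, evaluated at the two fixed points $x$ and $b$.

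The crux is the following structure statement: because $T$, hence the automorphism $A$, has zero topological entropy, for each $j$ the orbit $k\mapsto\widehat A^{\,k}\gamma_j$ is \emph{polynomial along arithmetic progressions}, i.e.\ there are integers $p_j\ge 1$ and $d_j\ge 0$ such that for every residue $r$ the map $m\mapsto\widehat A^{\,p_jm+r}\gamma_j$ has vanishing $(d_j+1)$-th finite difference in $m$. Expanding such a map in binomial coefficients as $\widehat A^{\,p_jm+r}\gamma_j=\sum_{l=0}^{d_j}\binom{m}{l}\eta^{(j)}_{r,l}$ and applying the homomorphism $\eta\mapsto\eta(x)$ from $\widehat X$ to $\mathbb R/\mathbb Z$, one obtains $(\widehat A^{\,p_jm+r}\gamma_j)(x)=e^{2\pi i\,\pi^{(j)}_{r}(m)}$ for a genuine real polynomial $\pi^{(j)}_{r}$ of degree $\le d_j$ (there is no branch ambiguity, the $\binom{m}{l}$ being integers); the same holds at $b$, and since the partial sum of a sequence that is polynomial along residue classes modulo $p_j$ is again of this form, of degree larger by one, the factor $\prod_{i<k}(\widehat A^{\,i}\gamma_j)(b)$ behaves likewise. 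Hence $\gamma_j(T^kx)=e^{2\pi i\,\Phi^{(j)}_{k\bmod p_j}(k)}$ for suitable real polynomials $\Phi^{(j)}_{r}$. Since $q_j$ has integer coefficients, $q_j(n)\bmod p_j$ depends only on $n\bmod p_j$; so, putting $Q=\mathrm{lcm}(p_1,\dots,p_\ell)$, on each residue class of $n$ modulo $Q$ there are real polynomials $R_0,\dots,R_{Q-1}$ (composites of the $\Phi^{(j)}_{r}$ with the $q_j$) with $\prod_{j=1}^{\ell}\gamma_j(T^{q_j(n)}x)=e^{2\pi i\,R_{n\bmod Q}(n)}$.

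Finally, granting this, I split the average over residues modulo $Q$ and write $\mathbf{1}_{n\equiv a\,(Q)}=\frac{1}{Q}\sum_{t=0}^{Q-1}e^{2\pi i t(n-a)/Q}$; then $\frac{1}{N}\sum_{n<N}w_n\prod_{j}\gamma_j(T^{q_j(n)}x)$ is a fixed linear combination of $O(Q^2)$ quantities of the form $\frac{1}{N}\sum_{n<N}w_n e^{2\pi i(R_a(n)+tn/Q)}$, and each of these tends to $0$ because $R_a(n)+tn/Q$ is a real polynomial in $n$ and $(w_n)$ is fully oscillating; therefore the whole average tends to $0$, which is \eqref{WMEL}. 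I expect the structure statement to be the main obstacle: extracting the quasi-unipotent behaviour of $\widehat A$ on an orbit from the bare zero-entropy hypothesis, the delicate point being that $\widehat X$ need not be finitely generated. I would base it on the structure theory of zero-entropy compact abelian group automorphisms: on any finitely generated $\widehat A$-invariant subgroup, $\widehat A$ is quasi-unipotent --- Yuzvinskii's addition formula exhibits such a subgroup as the dual of a zero-entropy factor of $(X,A)$, i.e.\ of $\mathbb T^d$ times a finite group, and on the toral part quasi-unipotency is Kronecker's theorem on integer matrices all of whose eigenvalues lie on the unit circle --- while zero entropy furthermore forces the subgroup generated by a single orbit to be of this type.
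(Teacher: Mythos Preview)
The paper does not prove Theorem~\ref{MET}; it is quoted from \cite{F2} (and the quasi-discrete-spectrum variant from \cite{F}) as background for the results actually established here. There is therefore no in-paper proof to compare your proposal against.

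That said, your outline is the natural strategy and is essentially the one carried out in the cited references: reduce to characters by density, use the zero-entropy hypothesis to show that each $n\mapsto\gamma_j(T^{q_j(n)}x)$ is, along residue classes, of the form $e^{2\pi i P(n)}$ for a real polynomial $P$, and then invoke full oscillation after expanding the residue indicator by additive characters. Your identification of the structural step as the crux is accurate. The precise fact you need is that $h(A)=0$ forces $\widehat A$ to be \emph{locally quasi-unipotent}: for every $\gamma\in\widehat X$ there exist $p\ge 1$ and $d\ge 0$ with $(\widehat A^{\,p}-1)^{d+1}\gamma=0$. This yields that the cyclic $\mathbb{Z}[t,t^{-1}]$-module generated by $\gamma$ is a quotient of $\mathbb{Z}[t,t^{-1}]/((t^p-1)^{d+1})$, hence finitely generated over $\mathbb{Z}$, which is exactly your claim that ``the subgroup generated by a single orbit'' is of the form dual to $\mathbb{T}^d\times F$. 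Your justification via Yuzvinskii addition and Kronecker's theorem on the toral factor is the right route, but as written it stops one step short: you assert that zero entropy forces the orbit subgroup to be finitely generated, whereas what one actually proves is the annihilation by $(t^p-1)^{d+1}$ (from which finite generation follows). Two minor points: the boundedness of $(w_n)$, which you invoke for the density reduction, is not part of the definition of full oscillation as stated in the paper, so strictly speaking it is an extra hypothesis; and one should note that $h(T)=h(A)$ so that the zero-entropy assumption indeed passes to the linear part.
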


The same result in Theorem \ref{MET} was proved in \cite{F} for topological dynamics systems of quasi-discrete spectrum in the sense of Hahn-Parry \cite{HP0}. 

The above discussion is related to Sarnak's conjecture \cite{Sarnak,Sarnak2}, concerning the everywhere convergence of weighted ergodic averages for topological dynamical systems of zero entropy. In this paper we discuss measure-preserving dynamical systems. We shall introduce some other properties of weights which are stronger than oscillation of order one, but different from oscillations of higher order and we shall discuss almost everywhere 
convergence of weighted ergodic averages.
\medskip

For a measure-preserving dynamical system $(X, \mathcal{B}, \nu, T)$
and an integrable function $f \in L^1(\nu)$, we shall study the weighted
ergodic limit
\begin{equation}\label{1}
\lim_{N\to \infty} \frac{1}{N}\sum_{n=0}^{N-1} w_n f(T^n x).
\end{equation}
One of possible assumptions that we shall make on the weights $(w_n)$
is as follows. There exists a constant $h > 0$ such that
\begin{equation}\label{2}
\max_{0\le t \le 1}\left|\sum_{n=0}^{N-1}  w_n e^{2 \pi i nt}\right| = O\left(\frac{N}{\log^h N}\right).
\end{equation}
Then we say $(w_n)$ is of {\em Davenport type}. The largest h will be denoted by
H and called the {\em Davenport exponent} of $(w_n)$. Davenport \cite{Davenport} proved
that the M\"{o}bius sequence $\mu$ has its Davenport exponent $H = +\infty$. Recall
that $\mu(1) = 1$ and $\mu(n) = (-1)^k$ if $n$ is square-free and has $k$ prime
factors. We shall remark that as a consequence of a well-known Davenport-
Erd\"{o}s-LeVeque theorem (see Theorem \ref{DELthm}), the limit (\ref{1}) exists and is equal to zero
almost everywhere under the assumption $H > \frac{1}{2}$ (Theorem \ref{WET}).

If the following stronger assumption is satisfied: there exists a constant
$\frac{1}{2}\le \alpha <1$ such that
\begin{equation}\label{Gproperty}
   \max_{0\le t\le 1} \left|\sum_{n=0}^{N-1} w_n e^{2\pi i nt}\right| =O(N^\alpha),
\end{equation}
we say $(w_n)$ is of {\em Gelfond type}. The smallest $\alpha$ will be denoted by $\Delta$
and called the {\em Gelfond exponent} of $(w_n)$. The Thue-Morse sequence
$(t_n)$ is defined by $t_n = (-1)^{s_2(n)}$ where $s_2(n)$ is the sum of dyadic digits
of $n$. Gelfond \cite{Gelfond} proved that $\Delta =\frac{\log 3}{
\log 4}$ for the Thue-Morse sequence.
The Thue-Morse sequence is a $2$-multiplicative arithmetic function (see Section \ref{q-seq} for the definition
of $q$-multiplicative sequence). 
If $(w_n)$ is a $q$-multiplicative sequence ($q \ge 2$ be an integer) and satisfies the Gelfond property
(\ref{Gproperty}), we shall prove that it satisfies the following strong Gelfond
property
\begin{equation}\label{SGproperty}
   \forall 0\le M <N, \ \ \ \max_{0\le t\le 1} \left|\sum_{M}^{N-1} w_n e^{2\pi i nt}\right| =O((N-M)^\alpha)
\end{equation}
(Theorem \ref{SGthm}). Under the assumption (\ref{SGproperty}) made on $(w_n)$, we have the
following estimate on the size of weighted Birkhoff sum
\begin{equation}\label{SizeBirkhoff}
  \nu\!-\!a.e. \ \ \  \sum_{n=0}^{N-1} w_n f(T^n x) =O(N^\alpha \log^2 N \log^{1+\delta} \log N).
\end{equation}
This holds for any $f \in L^2(\nu)$ and any $\delta > 0$ (Theorem \ref{thm:2}).
\medskip

We shall also study the oscillating properties of sequences of the form $h(S^n x)$, called realizations of dynamical system.  

Here is the organization of the paper. In Section 2, we prove a weighted Birkhoff theorem under the condition $H>\frac{1}{2}$ on the weights of Davenport type (Theorem \ref{WET}). In Section 3, for weights having the strong Gelfond property,
we prove a upper bound for the size of weighted Birkhoff sums (Theorem \ref{thm:2}). In Section 4, we prove that any $q$-multiplicative sequences having Gelfond property must have the strong Gelfond property
(Theorem \ref{SGthm}).
In Section 5, a random $q$-multiplicative sequence is considered and it is proved that it  has the Gelfond property almost surely (Theorem \ref{R-q-M}).   In Section 6, the fully oscillating property and the Gelfond property are proved
for a class of sequences realized by a measure-preserving dynamical systems (Theorem \ref{D-Gelfond}).   

\section{Birkhoff theorem with Davenport weights}
Let $(w_n) \subset \mathbb{C}$ be a sequence of complex numbers and $(u_n)\subset \mathbb{N}$ be a sequence of integers.   We say that the {\em DEL-condition} is satisfied by $(w_n)$ and $(u_n)$ if 
\begin{equation}\label{WETcondition}
   \sum_{n=1}^\infty \frac{1}{n^3} \max_{0\le t \le 1}\left| \sum_{k=0}^{n-1} w_k e^{2\pi i u_k t}\right|^2 <\infty.
\end{equation}
Here DEL refers to Davenport-Erd\"{o}s-LeVeque. The reason for this terminology will soon become clear.  
We say  that $(u_n)$ is a {\em $r$-Bourgain sequence}, $1\le r <\infty$ if for any 
dynamical system $(X, \mathcal{B}, \nu, T)$ the following maximal inequality holds:
\begin{equation}\label{r-B}
     \| \max_{N\ge 1} |A_N f| \|_r \le C  \|f\|_r \quad (\forall f \in L^r(\nu))
\end{equation}
where $A_N f = N^{-1} \sum_{n=0}^{N-1} f\circ T^{u_n}$ and $C>1$ is a constant.

We deduce the following weighted Birkhoff theorem  from a
well-known Davenport-Erd\"{o}s-LeVeque theorem (see Lemma \ref{DELthm}).

\begin{thm}\label{WET}
Let $(w_n)_{n\ge 0} \in \ell^\infty$ and $(u_n)\subset \mathbb{N}$. Suppose \\
\indent \ {\rm (H1)} \ $(u_n)$ is a $r$-Bourgain 
sequence for some $1\le r <\infty$;\\
\indent \ {\rm (H2)} \  $(w_n)$ and $(u_n)$ satisfy  the DEL-condition.\\
Let $(X, \mathcal{B}, \nu, T)$ be a measure-preserving dynamical system. Then for any $f \in  L^r(\nu)$, the limit
\begin{equation}
\lim_{N\to \infty} \frac{1}{N}\sum_{n=0}^{N-1} w_n f(T^{u_n} x)=0
\end{equation}
holds for $\nu$-almost every $x \in X$ and in
$L^r$-norm.
\end{thm}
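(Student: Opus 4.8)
The plan is to reduce the weighted ergodic theorem to an application of the Davenport–Erdős–LeVeque theorem (Lemma~\ref{DELthm}), which is the standard tool for deducing almost-everywhere convergence of averages from an $L^2$-type Wiener–Wintner condition on the correlations. First I would set $S_N f(x) = \frac{1}{N}\sum_{n=0}^{N-1} w_n f(T^{u_n}x)$ and observe that by a routine density argument it suffices to treat $f \in L^\infty(\nu)$, or even $f$ in a dense subclass, provided one has the maximal inequality to transfer the conclusion. This is exactly where hypothesis (H1) enters: since $(u_n)$ is an $r$-Bourgain sequence, the operators $A_N f = N^{-1}\sum_{n=0}^{N-1} f\circ T^{u_n}$ satisfy the maximal inequality \eqref{r-B}, and since $\|w\|_\infty < \infty$ one gets a corresponding maximal bound $\|\sup_N |S_N f|\|_r \le C\|w\|_\infty \|f\|_r$. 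By the Banach principle, the set of $f \in L^r(\nu)$ for which $S_N f \to 0$ a.e.\ is closed in $L^r$, so it is enough to establish a.e.\ convergence to $0$ on a dense subset.

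Next I would produce the dense subset and verify the DEL-condition pointwise in $f$. The natural dense set consists of $f$ of the form $g - g\circ T^{m}$ (coboundary-type functions, or more precisely functions whose spectral measure is controlled) together with eigenfunctions; but the cleanest route is spectral: for $f \in L^2(\nu)$ with spectral measure $\sigma_f$ on the circle, one has
\[
\int_X \left|\sum_{k=0}^{n-1} w_k f(T^{u_k}x)\right|^2 d\nu(x) = \int_0^1 \left|\sum_{k=0}^{n-1} w_k e^{2\pi i u_k t}\right|^2 d\sigma_f(t) \le \sigma_f([0,1)) \cdot \max_{0\le t\le 1}\left|\sum_{k=0}^{n-1} w_k e^{2\pi i u_k t}\right|^2.
\]
Hypothesis (H2), the DEL-condition \eqref{WETcondition}, then gives $\sum_n n^{-3}\int_X |\sum_{k<n} w_k f(T^{u_k}x)|^2\,d\nu < \infty$ for \emph{every} $f \in L^2(\nu)$. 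The Davenport–Erdős–LeVeque theorem converts precisely this summability into $\frac1N\sum_{k<N} w_k f(T^{u_k}x) \to 0$ for $\nu$-a.e.\ $x$. Thus the conclusion holds for all $f\in L^2\cap L^r$, which is dense in $L^r$ when $r \le 2$; for $r > 2$ one has $L^r \subset L^2$ directly and no density step is needed. Combining with the maximal inequality from the previous paragraph extends a.e.\ convergence to all of $L^r(\nu)$, and $L^r$-norm convergence follows from the a.e.\ convergence together with the uniform integrability supplied by the maximal function (or, more simply, by a parallel and easier $L^r$-mean argument via the same spectral estimate and the van der Corput/Hilbert-space lemma).

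The main obstacle is the interface between the $L^2$ spectral estimate (which is where the DEL-condition naturally lives, since it is quadratic in the exponential sum) and the $L^r$ setting of hypotheses (H1)–(H2): one must be careful that the dense class on which pointwise convergence is first proved is dense in $L^r$ and that the maximal inequality \eqref{r-B} is genuinely available for the shifted averages with bounded weights reinserted. I expect the spectral computation and the invocation of Lemma~\ref{DELthm} to be essentially mechanical; the only real care needed is in organizing the density/Banach-principle step so that it works uniformly across the range $1 \le r < \infty$, and in checking that $\|w\|_\infty<\infty$ is exactly what lets one pass from the unweighted Bourgain maximal inequality to a weighted one without losing the constant.
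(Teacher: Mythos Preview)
Your approach is essentially the paper's: the spectral lemma bounds $\int\big|\sum_{k<n} w_k f\circ T^{u_k}\big|^2 d\nu$ by the exponential-sum maximum, Davenport--Erd\H{o}s--LeVeque (Lemma~\ref{DELthm}) then gives a.e.\ convergence on a dense class, and the Bourgain maximal inequality (via the pointwise bound $|S_N f|\le \|w\|_\infty A_N|f|$) transfers this to all of $L^r$; the paper writes the last step as a direct $\epsilon$-approximation rather than invoking the Banach principle, but that is the same argument.

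One point to correct: Lemma~\ref{DELthm} as stated requires $\|\xi_n\|_\infty=O(1)$, so the class on which you first obtain a.e.\ convergence must be $L^\infty(\nu)$, not all of $L^2(\nu)$. In particular your remark that for $r>2$ ``no density step is needed'' because $L^r\subset L^2$ does not go through---an $L^r$ function with $r>2$ need not be bounded, so DEL does not apply to it directly. The fix is painless: take $L^\infty$ as the dense subclass (dense in $L^r$ for every $1\le r<\infty$) and run the maximal-inequality/density step uniformly in $r$, which is exactly what the paper does.
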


Davenport,
Erd\"{o}s and LeVeque \cite{DEL}, in their study of uniform distribution based on Weyl's criterion, proved the following theorem. It
was stated for a very special case, but the proof is valid for the general
statement below.

\begin{lem}
[Davenport-Erd\"{o}s-LeVeque \cite{DEL}] \label{DELthm}
 Let $(\xi_n)_{n\ge 0}$ be a sequence
of bounded random variables defined on some probability space,
such that $\|\xi_n\|_\infty = O(1)$ as $n \to \infty$. Suppose the following Davenport-
Erd\"{o}s-LeVeque condition is satisfied
\begin{equation}\label{DELcondition}
   \sum_{n=1}^\infty \frac{\mathbb{E} |\xi_0 + \xi_1 +\cdots + \xi_{n-1}|^2}{n^3}
     <\infty.
\end{equation}
Then almost surely we have
$$
    \lim_{n \to \infty} \frac{1}{n}\sum_{k=0}^{n-1} \xi_k = 0.
$$
\end{lem}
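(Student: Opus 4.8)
The plan is to prove the Davenport–Erd\H{o}s–LeVeque lemma by the classical subsequence-plus-maximal-oscillation argument. Write $S_n = \xi_0 + \cdots + \xi_{n-1}$ and put $a_n = \mathbb{E}|S_n|^2/n^3$, so that the hypothesis is $\sum_n a_n < \infty$. The key observation is a second-moment estimate along a sparse subsequence: by Cauchy–Schwarz, $\mathbb{E}|S_n|^2 = O\bigl(n^2 (\text{something summable})\bigr)$ is \emph{not} automatic, so instead one argues as follows. First I would show $\tfrac1n S_n \to 0$ along the subsequence $n_k = k^2$ (or any subsequence with $\sum_k n_k^{-1} < \infty$ after the weighting). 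Indeed
\begin{equation}
\sum_{k\ge 1} \mathbb{E}\left|\frac{S_{n_k}}{n_k}\right|^2 = \sum_{k\ge 1} \frac{\mathbb{E}|S_{n_k}|^2}{n_k^2} = \sum_{k\ge 1} n_k \cdot \frac{\mathbb{E}|S_{n_k}|^2}{n_k^3} \le \sum_{k\ge 1} n_k\, a_{n_k},
\end{equation}
and with $n_k = k^2$ one has $n_k a_{n_k} \asymp k^2 a_{k^2}$; comparing $\sum_k k^2 a_{k^2}$ with $\sum_n a_n$ via the integral/grouping test (each block $n \in [k^2,(k+1)^2)$ contains $\asymp k$ integers, and $a_n$ need not be monotone) requires a small additional argument, so in fact the cleaner route is to first replace $a_n$ by its running maximum or to note $\mathbb{E}|S_{n_k}|^2/n_k^2 \le \sum_{n_{k-1}< n\le n_k} a_n$ is false in general — hence one genuinely uses the Cauchy telescoping below.

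The robust version of the argument avoids choosing the subsequence by hand. Set $b_N = \max_{n\le N} \mathbb{E}|S_n|^2 / N^2$; one checks from $\sum a_n<\infty$ that $b_N \to 0$. Next, for the oscillation within a dyadic block $2^j \le n < 2^{j+1}$, estimate $\max_{2^j\le n<2^{j+1}} |S_n - S_{2^j}|$ in $L^2$: the natural bound uses that consecutive increments are $O(1)$, giving $|S_n - S_{2^j}| \le C 2^j$ deterministically, which only recovers $|S_n|/n = O(1)$. So the correct device is the one actually needed in DEL: consider the averaged/smoothed sums. The standard completion is: almost sure convergence of $\tfrac1{n_k}S_{n_k}$ to $0$ along $n_k=k^2$ follows from $\sum_k \mathbb{E}|S_{k^2}/k^2|^2 < \infty$, which in turn follows from $\sum_n a_n<\infty$ by Abel summation once one notes $\mathbb{E}|S_n|^2$ is ``almost increasing'' up to the factor controlling it — more precisely one uses $\mathbb{E}|S_n|^2 \le 2\mathbb{E}|S_m|^2 + 2\mathbb{E}|S_n-S_m|^2$ and $\mathbb{E}|S_n - S_m|^2 \le C(n-m)^2$ to interpolate. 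Then for $n_k^2 \le n < (k+1)^2$ write $|S_n| \le |S_{n_k^2}| + |S_n - S_{n_k^2}|$ and bound the second term by $C\bigl((k+1)^2 - k^2\bigr) = O(k)$, which is $o(k^2) = o(n)$; dividing by $n$ and letting $k\to\infty$ kills both terms almost surely. I would carry this out in that order: (i) the $L^2$-summability along $k^2$, (ii) Borel–Cantelli / Chebyshev to get a.s. convergence on the subsequence, (iii) the deterministic $O(1)$-increment interpolation to fill the gaps.

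The main obstacle is step (i): deducing $\sum_k \mathbb{E}|S_{k^2}|^2/k^4 < \infty$ from $\sum_n \mathbb{E}|S_n|^2/n^3 < \infty$ when $\mathbb{E}|S_n|^2$ is not assumed monotone. The resolution is precisely the $O(1)$-increment bound $\mathbb{E}|S_n - S_m|^2 \le C(n-m)^2$ (valid since $\|\xi_j\|_\infty = O(1)$), which lets one sandwich $\mathbb{E}|S_{k^2}|^2$ between nearby values of $\mathbb{E}|S_n|^2$ up to an additive $O(k^2)$ error; summing the error term against $k^{-4}$ converges, and the main term is comparable to a tail of $\sum_n \mathbb{E}|S_n|^2/n^3$ by the usual comparison of $\sum_k k \cdot c_{k^2}$ with $\sum_n c_n$ for nonnegative $c_n$ after this regularization. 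Everything else is routine: Chebyshev's inequality plus Borel–Cantelli for the subsequence, and the elementary gap-filling estimate. I would also remark that boundedness of the $\xi_n$ is used only through the increment bound and could be relaxed to $\sup_n \tfrac1n\sum_{j<n}\mathbb{E}|\xi_j|^2 = O(1)$, which is the form in which the lemma gets applied to $\xi_n = w_n f(T^{u_n}x)$ in Theorem~\ref{WET}.
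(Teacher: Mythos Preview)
The paper does not reproduce a proof of this lemma; it cites \cite{DEL} and remarks that ``the proof is valid for the general statement.'' The classical Davenport--Erd\H{o}s--LeVeque argument, however, is \emph{not} the $k^2$-subsequence scheme you outline. It runs as follows: by monotone convergence the hypothesis gives $\mathbb{E}\bigl[\sum_{n}|S_n|^2/n^3\bigr]<\infty$, hence almost surely $\sum_n |S_n(\omega)|^2/n^3<\infty$. One then argues \emph{pathwise}: the bounded increments $|S_{n+1}-S_n|\le C$ imply that $t_n:=S_n/n$ satisfies $|t_n-t_{n+1}|=O(1/n)$, so $t_n$ varies slowly. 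If $|t_N|>2\epsilon$ for some large $N$, then $|t_n|>\epsilon$ for all $n$ in an interval $[N,(1+\delta)N]$ with $\delta=\delta(\epsilon,C)>0$ fixed, and the contribution of this interval to $\sum_n|t_n|^2/n$ is at least $\epsilon^2\log(1+\delta)$, contradicting convergence of the tail. This gives $t_n\to 0$ almost surely.

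Your proposed route has a genuine gap at step~(i). The claim that $\sum_n \mathbb{E}|S_n|^2/n^3<\infty$ together with $O(1)$ increments forces $\sum_k \mathbb{E}|S_{k^2}|^2/k^4<\infty$ is \emph{false}. Take the deterministic (hence trivially bounded) sequence $\xi_n=(n+1)^{3/4}-n^{3/4}\in(0,1]$, so that $S_n=n^{3/4}$. Then $\sum_n \mathbb{E}|S_n|^2/n^3=\sum_n n^{-3/2}<\infty$, yet $\sum_k \mathbb{E}|S_{k^2}|^2/k^4=\sum_k k^{3}/k^{4}=\sum_k k^{-1}=\infty$. Your ``regularization'' paragraph does not rescue this: the comparison you invoke, $\sum_k k\,c_{k^2}\lesssim\sum_n c_n$ with $c_n=\mathbb{E}|S_n|^2/n^3$, yields only $\sum_k \mathbb{E}|S_{k^2}|^2/k^5<\infty$, which is one power of $k$ short of what Chebyshev--Borel--Cantelli needs. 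The increment bound lets you compare $\mathbb{E}|S_{k^2}|^2$ with an \emph{average} of $\mathbb{E}|S_n|^2$ over $n\in[k^2,(k+1)^2)$ up to $O(k^2)$, but dividing by $k^4$ and summing the main term leads to $\sum_n \mathbb{E}|S_n|^2/n^{5/2}$, a strictly stronger hypothesis than the one assumed.

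The fix is exactly the Fubini-then-pathwise argument above: pass the expectation inside the sum first, so that you land in the deterministic setting for almost every $\omega$, and then exploit the slow variation of $S_n/n$ directly rather than trying to control second moments along a sparse subsequence. Your steps~(ii) and~(iii) are fine once you have a.s.\ convergence along the subsequence, but the correct deterministic argument makes the subsequence detour unnecessary.
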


{\it Proof of Theorem~\ref{WET}.} First assume $f \in L^\infty(\nu)$. Let $\xi_n = f(T^{u_n}x)$.
The almost everywhere convergence  follows directly from Davenport-Erd\"{o}s-LeVeque’s theorem,
the hypothesis (H2) and the spectral lemma which gives
$$
\mathbb{E} |\xi_0 + \xi_1 +\cdots + \xi_{n-1}|^2 = \int \left| \sum_{k=0}^{n-1}w_k e^{2\pi i u_k t}\right|^2 d\sigma_f(t)
 \le \max_{0\le t \le 1} \left| \sum_{k=0}^{n-1}w_k e^{2\pi i u_k t}\right|^2,
 $$
where $\sigma_f$ is the spectral measure of $f$ defined on the circle $\mathbb{R}/\mathbb{Z}$.

Now assume $f \in  L^r(\nu)$. For any $\epsilon  > 0$, there exists $g \in  L^\infty(\nu)$ such
that $\| f - g\|_r<\epsilon$. By writing $f = g + (f - g)$, we get
$$
\left|\frac{1}{N} \sum_{n=0}^{N-1} w_n f(T^n x)\right|
\le \left|\frac{1}{N} \sum_{n=0}^{N-1} w_n g(T^n x)\right| + \|w\|_\infty\left|\frac{1}{N} \sum_{n=0}^{N-1} |f-g|\circ T^n(x)\right|.
$$
Applying the above proved result to $g$, 
we get
$$
\nu\!-\!a.e. \ \ \ \limsup_{N\to \infty} \left|\frac{1}{N} \sum_{n=0}^{N-1} w_n f(T^n x)\right| \le \max_{N\ge 1}A_N |f-g|(x).
$$
Then, by the hypothesis (H1),  we have
$$
 \mathbb{E} \limsup_{N\to \infty}   \left| \frac{1}{N} \sum_{n=0}^{N-1} w_n f(T^n x)\right|^r \le C^r \|f-g\|_r^r<C^r \epsilon^r.
$$
We have thus finished the almost everywhere convergence,  for $\epsilon$ can be arbitrarily small.   

After having proved the pointwise convergence, it is easy to deduce the $L^r$-convergence by Lebesgue's dominated convergence theorem.
$\Box$

\medskip
{\bf Remark 1.} If $(w_n)$ is of Davenport type with Davenport exponent
$H > \frac{1}{2}$, the series in the hypothesis (\ref{WETcondition}) is bounded by
$\sum \frac{1}{n \log^{2H} n}$
which is finite.   On the other hand, the classical ergodic  maximal inequality means that the set of natural numbers is a $1$-Bourgain sequence.  Therefore we have the following corollary.

\begin{cor}\label{Cor1}  Suppose that $(w_n)\subset \ell^\infty$ is of Davenport type with Davenport exponent
$H > \frac{1}{2}$. Let $(X, \mathcal{B}, \nu, T)$ be a measure-preserving dynamical system. Then for any $f \in  L^1(\nu)$, the limit
\begin{equation}
\lim_{N\to \infty} \frac{1}{N}\sum_{n=0}^{N-1} w_n f(T^{u_n} x)=0
\end{equation}
holds for $\nu$-almost every $x \in X$ and in
$L^1$-norm.   
\end{cor}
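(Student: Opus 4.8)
The plan is to deduce Corollary~\ref{Cor1} directly from Theorem~\ref{WET} by verifying its two hypotheses in the special case $u_n = n$, and then upgrading the integrability exponent from $r=2$ to $r=1$ via the maximal inequality. First I would take $(u_n)$ to be the identity sequence $u_n = n$; then $A_N f = N^{-1}\sum_{n=0}^{N-1} f\circ T^n$ is the ordinary Birkhoff average, and the classical Wiener--Hopf (Hardy--Littlewood) maximal ergodic inequality gives $\|\sup_{N\ge 1}|A_N f|\|_1 \le C\|f\|_1$ for every $f\in L^1(\nu)$, so $(u_n)$ is a $1$-Bourgain sequence; this establishes (H1) with $r=1$.

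Next I would check (H2), the DEL-condition \eqref{WETcondition}. With $u_k = k$ we must bound $\sum_{n\ge 1} n^{-3}\max_{0\le t\le 1}\bigl|\sum_{k=0}^{n-1} w_k e^{2\pi i kt}\bigr|^2$. By the definition of Davenport type with exponent $H$, and choosing any $h$ with $\tfrac12 < h < H$ (or $h$ close to $H$ if $H$ is finite, and any $h>\tfrac12$ if $H=+\infty$), we have $\max_{0\le t\le 1}|\sum_{k=0}^{n-1} w_k e^{2\pi i kt}| = O(n/\log^h n)$, so the $n$th term of the series is $O\bigl(n^{-1}\log^{-2h} n\bigr)$. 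Since $2h > 1$, the series $\sum_n n^{-1}\log^{-2h} n$ converges (by the integral test, or Cauchy condensation), so the DEL-condition holds. This is exactly Remark~1 in the excerpt.

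With (H1) and (H2) verified for $r=1$, Theorem~\ref{WET} applies verbatim and yields the claimed $\nu$-a.e.\ and $L^1$-norm convergence of $N^{-1}\sum_{n=0}^{N-1} w_n f(T^n x)$ to $0$ for every $f\in L^1(\nu)$. (Here $u_n = n$ so $T^{u_n} = T^n$, matching the statement; one could equally state it for a general $1$-Bourgain sequence, but the corollary as written uses the identity.) I would not expect any genuine obstacle here: the proof is a routine specialization. The only point requiring a word of care is making sure the Davenport exponent, which is defined as the supremum of admissible $h$, need not itself be attained --- so one picks an admissible $h\in(\tfrac12,H]$ strictly, which is always possible precisely because $H>\tfrac12$; this is what makes the exponent $2H$ in the bound $\sum n^{-1}\log^{-2H} n$ harmless even in the borderline reading. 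The passage from bounded $f$ to $L^1$ functions, and from a.e.\ convergence to $L^1$ convergence, is already handled inside the proof of Theorem~\ref{WET} and needs no repetition.
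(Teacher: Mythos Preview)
Your proposal follows exactly the paper's own argument (Remark~1 preceding the corollary): bound the DEL-series by $\sum n^{-1}\log^{-2h}n$ with $h>\tfrac12$ to get (H2), invoke the classical ergodic maximal inequality for $u_n=n$ to get (H1) with $r=1$, and apply Theorem~\ref{WET}. The only quibble---shared verbatim with the paper---is that the ergodic maximal operator is of weak type $(1,1)$ rather than strong type, so $u_n=n$ is not literally ``$1$-Bourgain'' under the stated definition; but the weak-type inequality is all the proof of Theorem~\ref{WET} actually needs, so this is harmless.
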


 In \cite{AKLR} (see Proposition 3.1 there), Abdalaoui et al proved Corollary \ref{Cor1}
 under the stronger condition $H > 1$ using a more elementary argument than that of Davenport-Erd\"{o}s-
LeVeque. Actually, the result in \cite{AKLR}
 was only
stated for M\"{o}bius function, but the argument works for all $(w_n)$
with $H > 1$.
\medskip

Bourgain proved that polynomial sequence  $(u_n):=p(n)$ with $p(x) \in \mathbb{Z}[x]$ is a $r$-Bourgain sequence for $r>1$. So, we have the following corollary.

\begin{cor} \label{Cor2}  
Let $(X, \mathcal{B}, \nu, T)$ be a measure-preserving dynamical system and let $p(x) \in \mathbb{Z}[x]$ taking values in $\mathbb{N}$ and $(w_n)\in \ell^\infty $.  Assume $r>1$. Suppose that 
$$
         \max_{0\le t\le 1 } \left|\sum_{n=0}^{N-1} w_n e^{2\pi ip(n) t}\right| \le C\frac{N}{\log^h N}
$$
with $h>\frac{1}{2}$ and $C>1$.
Then for any $f \in  L^r(\nu)$, the limit
\begin{equation}
\lim_{N\to \infty} \frac{1}{N}\sum_{n=0}^{N-1} w_n f(T^{u_n} x)=0
\end{equation}
holds for $\nu$-almost every $x \in X$ and in
$L^r$-norm.
\end{cor}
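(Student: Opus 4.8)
The plan is to derive Corollary \ref{Cor2} as an immediate consequence of Theorem \ref{WET}, by checking that its two hypotheses (H1) and (H2) hold with $u_n = p(n)$ and the exponent $r>1$. The work consists almost entirely of verifying these hypotheses; no new ergodic-theoretic machinery is needed.

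First I would address (H1). By Bourgain's polynomial pointwise ergodic theorem, for each $r>1$ and each polynomial $p\in\mathbb{Z}[x]$ taking values in $\mathbb{N}$, the averaging operators $A_Nf = N^{-1}\sum_{n=0}^{N-1} f\circ T^{p(n)}$ satisfy the maximal inequality $\|\sup_{N\ge1}|A_Nf|\|_r \le C\|f\|_r$ on every measure-preserving system. This is precisely the statement that $(p(n))$ is an $r$-Bourgain sequence in the terminology of Section 2, so (H1) holds. I would cite Bourgain's papers on pointwise ergodic theorems for arithmetic sets here.

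Next I would verify (H2), i.e.\ the DEL-condition \eqref{WETcondition} for $(w_n)$ and $(u_n) = (p(n))$. By hypothesis there is a constant $C>1$ with
$$
\max_{0\le t\le 1}\left|\sum_{k=0}^{n-1} w_k e^{2\pi i p(k) t}\right| \le C\,\frac{n}{\log^h n}
$$
for all large $n$, with $h>\tfrac12$. Squaring and dividing by $n^3$, the general term of the series in \eqref{WETcondition} is $O\!\left(\frac{1}{n\log^{2h} n}\right)$, and since $2h>1$ the series $\sum_n \frac{1}{n\log^{2h} n}$ converges (the finitely many small $n$ contribute a finite amount). Hence the DEL-condition is satisfied. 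This mirrors exactly the computation in Remark~1 that underlies Corollary~\ref{Cor1}.

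With both hypotheses in hand, Theorem~\ref{WET} applied to the system $(X,\mathcal{B},\nu,T)$ and the sequence $(u_n)=(p(n))$ yields, for every $f\in L^r(\nu)$, that $\frac1N\sum_{n=0}^{N-1} w_n f(T^{p(n)}x) \to 0$ for $\nu$-a.e.\ $x$ and in $L^r$-norm, which is the assertion of the corollary. There is no real obstacle; the only point requiring care is that the invocation of Bourgain's theorem genuinely needs $r>1$ (it fails at $r=1$), which is why the hypothesis $r>1$ appears in the statement, in contrast to Corollary~\ref{Cor1} where the ordinary ergodic maximal inequality supplies the $r=1$ case for linear $u_n=n$.
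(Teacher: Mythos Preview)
Your proposal is correct and follows exactly the paper's approach: the paper derives Corollary~\ref{Cor2} directly from Theorem~\ref{WET} by noting that Bourgain's theorem makes $(p(n))$ an $r$-Bourgain sequence for $r>1$ (hypothesis (H1)), while the assumed bound with $h>\tfrac12$ yields the DEL-condition (hypothesis (H2)) via the same computation as in Remark~1. There is nothing to add.
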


Eisner \cite{Eisner}  stated  Corollary \ref{Cor2} with
 $(w_n):=(\mu(n))$ the  M\"{o}bius function  and observed that it can be proved 
using the argument in \cite{AKLR}.

{\bf Remark 2.}  Assume $f \in L^2(\nu)$ and $\mathbb{E}f = 0$. Cohen and Lin \cite{CL}
proved that the condition
\begin{equation}\label{CLcondition}
\sum_{n=1}^\infty \frac{\log n}{n^3} 
\|f + f \circ  T + \cdots  + f \circ T^{n-1} \|_2^2 <\infty
\end{equation}
is necessary and sufficient for the ergodic Hilbert transform
$\sum_{n=1}^\infty \frac{f(T^n x)}{n}$
to converge in $L^2$-norm. Cuny \cite{Cuny} proved that this condition implies
the almost everywhere convergence of
$\sum_{n=1}^\infty \frac{f(T^n x)}{n}$, a fortiori the almost
everywhere convergence to zero of $\frac{1}{n}\sum_{k=0}^{n-1} f(T^kx)$. The Cohen-Lin's
condition (\ref{CLcondition}) can be stated as follows
$$ 
\sum_{n=1}^\infty \frac{\log n}{n^3}
\left\|\sum_{k=0}^{n-1}  e^{2 \pi i k t} \right\|_{L^2(\sigma_f)}^2 <\infty
$$ 
which is to be compared with (\ref{WETcondition}). The almost everywhere convergence of ergodic series 
of the form $\sum a_n f(T^n x)$ is studied in \cite{F0} and \cite{CF}. See the references therein on the subject.  
\medskip

{\bf Remark 3.} If the limit (\ref{1}) exists almost everywhere and in $L^1$-
norm for every dynamical system $(X, \mathcal{B}, \nu, T)$ and for every $f \in L^1(\nu)$,
we call $(w_n)$ a good sequence of weights for the ergodic theorem. The
conclusion of Theorem \ref{WET} is a little bit stronger because the limit is always
zero. Lesigne and Mauduit \cite{LM} (p.151-152) proved that, given a $q$-
multiplicative sequence $(w_n)$, a necessary and sufficient condition for all
sequence $(\rho(w_n))$, $\rho$ being a continuous function defined on the circle,
to be good is that for every real number $t$ and every integer $d\ge 1$ the
following limit exists
\begin{equation}\label{LMcondition}
     \lim_{N\to \infty} \frac{1}{N} \sum_{n=0}^{N-1} w_n^d e^{2 \pi i n t}.
\end{equation}
That is the case when $(w_n)$ is a $q$-multiplicative sequence taking  only a finite number of values.
\medskip

{\bf Remark 4.} In \cite{LMM}, Lesigne, Mauduit and Moss\'e associated a skew
product to a $q$-multiplicative sequence. They proved that if the cocycle
is weakly mixing, all continuous images of every orbits of the skew
product dynamics are good.
\medskip


\section{Size of weighted Birkhoff sums\label{sec:Weight-sums}}
Let $ (w_{n})_{n\geq 0} $ be a sequence of complex numbers and $ (u_{n})_{n\geq 0} $ be a strictly increasing sequence of positive integers. For any integer $ N \geq 1$, denote
\[
S_{N}^{w,u}(x) = \sum_{n=0}^{N-1}w_{n}\mathrm{e}^{2\pi i u_{n} x}.
\]
The following condition will be considered
\begin{equation}\label{eq:condition-1}
\max\limits_{x\in\R}\lvert S_{N}^{w,u}(x)\rvert \leq CN^{\beta} \ \ \ (\forall\, N\geq 1)
\end{equation}
for some $ 1/2\le \beta <1 $ and $ C > 0 $. Sometimes, the following stronger condition will also be considered
\begin{equation}\label{eq:condition-2}
\max\limits_{x\in\R}\lvert S_{N}^{w,u}-S_{M}^{w,u} \rvert \leq D(N-M)^{\beta}\ \ \ (\forall\, N\geq M\geq 1)
\end{equation}
for some $ 1/2\le \beta <1 $ and $ D > 0 $.

Give a measure-preserving dynamical system $ (X,\mathscr{B},\nu, T) $.
 We shall consider, for $ f\in L^{2}(\mu) $, the weighted Birkhoff sums
\[
\sigma_{N}^{w,u}f(x) := \sum_{n=0}^{N-1}w_{n}f(T^{u_n}x).
\]
\begin{thm}\label{thm:2}
	Let $ (w_{n})_{n\geq 0} $ be a sequence of complex numbers and $ (u_{n})_{n\geq 0} $ be a strictly increasing sequence of positive integers. Suppose that $ (w_{n}) $ and $ (u_{n}) $ satisfy the condition \eqref{eq:condition-2} with $  \frac{1}{2}\le \beta <1 $
and that $\phi: \mathbb{R}_+ \to \mathbb{R}_+$ is an increasing function such that $\phi(x) \le C \phi(2x)$
for some $C>1$ and for all $x >0$, and
\begin{equation}\label{phicondition}
    \sum_{n=1}^\infty \frac{1}{n\phi(n)} <\infty.
\end{equation}
 Then for any dynamic system $ (X,\mathcal{B}, \mu, T) $ and any $ f\in L^{2}(\mu) $, for $ \mu $-almost all $ x $, we have
	\[
	 \lim\limits_{N\to\infty} \frac{1}{N^{\beta}\log^{2}N \phi(\log N)}\sum_{n=0}^{N-1}w_{n}f(T^{u_n}x) = 0.
	\]
\end{thm}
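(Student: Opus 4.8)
The plan is to reduce the almost-everywhere statement to a Borel–Cantelli argument along a dyadic subsequence, using the spectral lemma to convert the $L^2(\mu)$ estimates into Fourier integrals against the spectral measure $\sigma_f$, where the hypothesis \eqref{eq:condition-2} on $S_N^{w,u}-S_M^{w,u}$ feeds in directly. First I would record the spectral identity
\[
\left\| \sigma_N^{w,u}f - \sigma_M^{w,u}f \right\|_{L^2(\mu)}^2 = \int_{\mathbb{R}/\mathbb{Z}} \left| S_N^{w,u}(t) - S_M^{w,u}(t) \right|^2 \, d\sigma_f(t) \le D^2 (N-M)^{2\beta} \,\sigma_f(\mathbb{R}/\mathbb{Z}),
\]
so that $\|\sigma_{2^{k+1}}^{w,u}f - \sigma_{2^k}^{w,u}f\|_2 = O(2^{k\beta})$. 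Setting $a_k = 2^{k\beta} k^{3/2}\phi(k)^{1/2}$ (or something comparable), the series $\sum_k \|\sigma_{2^{k+1}}^{w,u}f - \sigma_{2^k}^{w,u}f\|_2^2 / a_k^2 \asymp \sum_k 1/(k^3\phi(k))$ converges by \eqref{phicondition}. Hence $\sum_k (\sigma_{2^{k+1}}^{w,u}f - \sigma_{2^k}^{w,u}f)/a_k$ converges in $L^2$ and, by monotone convergence of the partial sums of squares, the summand tends to $0$ $\mu$-a.e.; telescoping and a Kronecker-type lemma give $\sigma_{2^k}^{w,u}f(x) = o(2^{k\beta} k^{3/2}\phi(k)^{1/2})$ a.e. Comparing with the target normalization $N^\beta \log^2 N\,\phi(\log N)$ at $N=2^k$, where $\log N \asymp k$, this is more than enough room (in fact one only needs $2^{k\beta}$ times something like $k^{1+\varepsilon}$ against $k^2\phi(k)$), so the dyadic-subsequence statement follows.

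The second and more delicate step is to pass from the dyadic subsequence $N = 2^k$ to all $N$. For $2^k \le N < 2^{k+1}$ I would write $\sigma_N^{w,u}f(x) = \sigma_{2^k}^{w,u}f(x) + (\sigma_N^{w,u}f(x) - \sigma_{2^k}^{w,u}f(x))$ and must control the maximal "block" oscillation
\[
M_k f(x) := \max_{2^k \le N < 2^{k+1}} \left| \sigma_N^{w,u}f(x) - \sigma_{2^k}^{w,u}f(x) \right|.
\]
The clean way is a chaining / dyadic-decomposition bound à la Rademacher–Menshov: any partial block $[2^k, N)$ splits into at most $k$ dyadic-type subintervals, and applying the Cauchy–Schwarz inequality over these $O(k)$ pieces together with the spectral estimate $\|\sigma_b^{w,u}f - \sigma_a^{w,u}f\|_2 \le D(b-a)^\beta\sqrt{\sigma_f(\mathbb{R}/\mathbb{Z})} \le D\,2^{k\beta}\sqrt{\sigma_f(\mathbb{R}/\mathbb{Z})}$ for each piece yields $\|M_k f\|_2 = O(2^{k\beta} k)$, the extra factor $k$ (one $\log N$) being the price of the Rademacher–Menshov square-function. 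Then $\sum_k \|M_k f\|_2^2 / (2^{k\beta} k^{3/2}\phi(k)^{1/2})^2 \asymp \sum_k 1/(k\phi(k))<\infty$ again by \eqref{phicondition} (this is exactly where the summability hypothesis on $1/(n\phi(n))$ rather than $1/(n\log n\,\phi(n))$ is used, absorbing the Rademacher–Menshov loss), so $M_k f(x) = o(2^{k\beta} k^{3/2}\phi(k)^{1/2})$ a.e. as well. Combining with the subsequential bound and using the doubling property $\phi(x)\le C\phi(2x)$ to compare $\phi(\log N)$ with $\phi(k)$ across the block, I get $|\sigma_N^{w,u}f(x)| = o(N^\beta \log^2 N\,\phi(\log N))$ a.e., since the worst exponent of the logarithm produced above is $3/2 < 2$, leaving the stated $\log^2 N$ with room to spare.

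The main obstacle is the maximal estimate over blocks in the second step: naively bounding $M_k f$ by summing $\|\sigma_N^{w,u}f - \sigma_{2^k}^{w,u}f\|_2$ over all $2^k$ values of $N$ is hopelessly lossy, so one genuinely needs the Rademacher–Menshov dyadic chaining to get only an $O(k)$ (i.e. $O(\log N)$) factor; keeping careful track of that single logarithm, together with the $k^{3/2}$ (i.e. $\log^{3/2}N$) from the Borel–Cantelli weight $a_k$, is what must be checked to land inside $\log^2 N\,\phi(\log N)$. Everything else — the spectral lemma, the telescoping, and the use of $\phi(x)\le C\phi(2x)$ to replace $\phi(\log N)$ by $\phi(k)$ uniformly on each dyadic block — is routine. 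It is worth noting that the hypothesis is only \eqref{eq:condition-2} (the stronger "strong Gelfond"–type bound on differences $S_N - S_M$), not merely \eqref{eq:condition-1}, precisely because the block oscillation $\sigma_N^{w,u}f - \sigma_{2^k}^{w,u}f$ requires estimating $S_N^{w,u}-S_{2^k}^{w,u}$, not just $S_N^{w,u}$.
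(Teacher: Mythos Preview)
Your proposal is correct and follows essentially the same route as the paper: the spectral lemma, a Rademacher--Menshov/M\'oricz maximal inequality over dyadic blocks giving $\|M_k f\|_2=O(k\,2^{k\beta})$, then an $L^2$ Borel--Cantelli argument exploiting $\sum 1/(n\phi(n))<\infty$, and finally the doubling hypothesis on $\phi$ to pass from block index $k$ to $\log N$. The only cosmetic differences are that the paper invokes M\'oricz's inequality directly (using the superadditivity of $g_{a,b}=D^2(b-a)^{2\beta}$ for $2\beta\ge1$) rather than sketching the chaining, chooses the slightly cruder weight $m^2\phi(m)\rho^{m\beta}$ via the triangle inequality $\sum_m\|M_m\|_2/(m^2\phi(m)\rho^{m\beta})<\infty$ instead of your sharper $\sum_k\|M_k\|_2^2/a_k^2$, and bounds $|\sigma_N|$ in one stroke by $\sum_{k\le m}M_k$ rather than separating the dyadic subsequence from the block remainder.
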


\begin{proof}
	For integers $ n\geq m\geq 0 $, define $ g_{m,n} =D^{2}(n-m)^{2\beta} $. 	By  the spectral lemma (see \cite{Krengel}, p. 94), for all $ 0\leq a < b $ we have
	\[
	\int \bigg\lvert \sum_{n=a}^{b'-1}w_{n}f(T^{n}x) \bigg\rvert^{2} \mathrm{d} \mu(x) \leq g_{a,b}.
	\]
	Since $ 2\beta \geq 1 $, we have $ (x+y)^{2\beta} \geq x^{2\beta} + y^{2\beta} $ for all $ x\geq 0 $ and $ y\geq 0 $ so that for $ 0\leq a\leq b \leq c $, we have the super-additivity
	\[
	g_{a,c}\geq g_{a,b}+ g_{b,c}.
	\]
        Then using the condition \eqref{eq:condition-2}, we can
	apply the inequality (2.2) in \cite{Moricz} with $ \gamma =2 $ to get
	\[
 	\int \max\limits_{a\leq b'\leq b} \bigg\lvert \sum_{n=a}^{b'-1}w_{n}f(T^{n}x) \bigg\rvert^{2} \mathrm{d} \mu(x) \leq D^{2}(b-a)^{2\beta}\log^{2}2(b-a).
	\]
	In particular, for a fixed $ \rho > 1 $, we have
	\[
	\forall\, m\geq 0,\quad \lVert M_{m} \rVert_{L^{2}(\mu)} = \mathrm{O}(m\rho^{m\beta})
	\]
	where
	\[
	M_{m}(x) = \max\limits_{\rho^{m}\leq b \leq \rho^{m+1}}\bigg\lvert \sum_{\rho^{m}\leq n <b}w_{n}f(T^{u_n}x) \bigg\rvert.
	\]
	It follows that
	\[
	\bigg\lVert \sum_{m=1}^{\infty}\frac{M_{m}}{m^{2}\phi(m)\rho^{m\beta}}\bigg\rVert_{L^{2}(\mu)}
       = \mathrm{O}\bigg(\sum_{m=1}^{\infty}\frac{1}{m \phi(m)}\bigg) < +\infty.
	\]
	So, for $ \mu $-almost all $ x $, we have
	\[
	M_{m}(x)  = o(m^{2}\phi(m)\rho^{m\beta}).
	\]
	For any $ N \geq 1$, there exists $ m $ such that $ \rho^{m} \leq N < \rho ^{m+1} $.
	So we get
	\begin{align*}
	\bigg\lvert \sum_{n=1}^{N-1} w_{n}f(T^{n}x) \bigg\rvert
\leq \sum_{k=0}^m M_{k}
	=o\bigg(\sum_{k=1}^{m}k^{2} \phi(k) \rho^{k \beta}\bigg)=o(m^2 \phi(m) \rho^{\beta m}).
	\end{align*}
Here we have used the monotonicity of  $\phi$. 
Since $\phi(2x) \le C \phi(x)$ and $\rho^{m}\le  N$, $m^2 \phi(m) \rho^{\beta m}$ is bounded by
$N^\beta \log^2 N \phi(\log N)$, up to the multiplicative constant.
	We have thus completed the proof.
\end{proof}
A weaker result was obtained in \cite{DS}. 

For q-multiplicative sequences, we shall prove the strong Gelfond
property (\ref{SGproperty}), which will allows us to apply Theorem \ref{thm:2} to $q$-multiplicative sequences.

\section{Strong Gelfond property of $q$-multiplicative sequences}\label{q-seq}

Let $ q \geq 2 $ be an integer. A function $ f\colon \mathbb{N} \rightarrow \mathbb{C}$ is said to be {\em $ q $-multiplicative} if
\begin{equation}
f( aq^{n}+b ) = f( aq^{n} ) f(b)\label{eq:condationof-qmf}
\end{equation}
for all integers $ m \geq 1 $, $ a\geq 0 $ and $ 0 \leq b < q^{m} $.
A $ q $-multiplicative function $ f $ is completely determined by its values $ f( aq^{m} )$
for $ m\geq 0 $ and $ 0 \leq a < q $, which constitute the so-called skeleton of $ f $. Indeed, for $ n = \sum a_{j} q^{j} $ where $ a_{j} \in \{ 0, 1, \cdots, q-1  \} $, we have
\[
f(n) = \prod f( a_{j} q^{j} )  .
\]
It is also clear that $ f(0) = 0 $ or $ f(0) = 1 $. In the following, we assume that $ q $-multiplicative sequences take values
 in $ \mathbb{U} := \{ \xi \in \mathbb{C} \colon \vert \xi \vert = 1  \} $, the multiplicative group of complex numbers of modulus $ 1 $.

For a $ q $-multiplicative function $ f$,   we consider the trigonometric polynomials
\[
S_{N}^{f} (x) := S_{N}(x) := \sum_{n=0}^{N-1}f(n)\,\mathrm{e}^{2\pi i n x} \quad (x \in \mathbb{R}) ,
\]
where $ N \geq 1 $. For simplicity, we will write $ e(x) $ for $ \mathrm{e}^{2\pi i x } $.
Notice that for any $ x $, $ n\mapsto e(nx) $ is $ q $-multiplicative for any given $ q $. 
Typical examples of $q$-multiplicative sequences are $e^{2\pi i c s_q(n)}$ where $c$ is a fixed real number and $s_q(n)$ is the sum of digits in the $q$-adic expansion of $n$.
\begin{thm}\label{SGthm}
Let $ f\colon \mathbb{N} \rightarrow \mathbb{U} $ be a $ q $-multiplicative function. Suppose that there exist constants $ C \geq 1 $ and $ \frac{1}{2} \le \beta < 1 $ such that
\[
\max\limits_{x\in \mathbb{R}} \lvert S_{n}^{f}(x) \rvert \leq C n^{\beta} \quad (\forall \, n \geq 1) .
\]
Then there exists a constant $ D > 0$ such that
\[ \max\limits_{x\in \mathbb{R}} \lvert S_{n}^{f}(x) - S_{m}^{f}(x) \rvert \leq D (n-m)^{\beta} \quad(\forall \,  n\geq m \geq 0).\]
\end{thm}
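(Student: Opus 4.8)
The plan is to exploit the $q$-adic structure of $f$ to reduce the estimate for an arbitrary block $[m,n)$ to the single-interval bound $\max_x|S_k^f(x)|\le Ck^\beta$ which we are allowed to assume. The natural tool is the $q$-adic decomposition of the interval $[m,n)$ into $O(\log_q(n-m))$ sub-blocks, each of the form $[aq^j,(a+1)q^j)$ with $0\le a<q$, on which $f$ factors. Concretely, writing $m$ and $n$ in base $q$, the set-difference $[m,n)$ can be partitioned (by a carry/greedy argument) into intervals $I$ of the shape $cq^j+[0,aq^j)$ where the offset $cq^j$ is a multiple of $q^j$ and $0<a\le q$; on such an interval $q$-multiplicativity gives, for $n'=cq^j+r$ with $0\le r<aq^j$,
\[
f(n') \;=\; f(cq^j)\,f(r),
\]
so that
\[
\sum_{n'\in I} f(n')\,e(n'x) \;=\; f(cq^j)\,e(cq^jx)\sum_{r=0}^{aq^j-1} f(r)\,e(rx).
\]
Since $|f(cq^j)\,e(cq^jx)|=1$, the modulus of this partial sum is at most $\max_x|S_{aq^j}^f(x)|\le C(aq^j)^\beta\le Cq^\beta\,(q^j)^\beta$.

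Next I would sum these contributions over the $O(\log_q(n-m))$ pieces. The scales $q^j$ appearing in the decomposition are distinct powers of $q$ no larger than roughly $n-m$, so $\sum_j (q^j)^\beta$ is a geometric-type sum dominated by its largest term, which is $O((n-m)^\beta)$. This gives $\max_x|S_n^f(x)-S_m^f(x)|=O((n-m)^\beta)$ with an implied constant depending only on $C$, $q$ and $\beta$ (through the factor $Cq^\beta$ and the geometric series constant $(1-q^{-\beta})^{-1}$), which is exactly the claimed strong Gelfond bound with a suitable $D$. One should be slightly careful that the greedy decomposition may use up to two sub-blocks at each scale $j$ (one "on the way up" from $m$ and one "on the way down" to $n$), but this only doubles the constant and does not affect the power $\beta$.

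The main obstacle, and the point requiring the most care, is the combinatorial bookkeeping of the $q$-adic decomposition of $[m,n)$: making precise that every piece really has the homogeneous form $cq^j+[0,aq^j)$ with the offset divisible by $q^j$, so that the factorization $f(cq^j+r)=f(cq^j)f(r)$ is legitimately available (this is where the hypothesis $0\le b<q^m$ in the definition of $q$-multiplicativity must be respected). Once the decomposition is set up cleanly, the analytic part is a one-line triangle inequality plus a geometric sum, and the hypothesis $\beta\ge \tfrac12$ is not even needed here — only $\beta<1$ matters for the geometric series to be summable; indeed the argument works for any $\beta>0$. I would present the decomposition either via the base-$q$ digit strings of $m$ and $n$ directly, or inductively by peeling off the lowest-order digit, whichever yields the shorter write-up.
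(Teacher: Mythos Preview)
Your approach is correct and is genuinely different from the paper's. The paper proceeds by induction on $j$ (where $q^j\le n-m<q^{j+1}$): it first proves the bound $|S_n-S_m|\le Kq^{j\beta}$ in the special case where one endpoint is a multiple of $q^{j-1}$, using an inductive step that requires $Kq^{-2\beta}+Cq^{2-\beta}\le K$, and then bootstraps to the general case by inserting a nearby multiple of $q^{j-1}$. In particular, the paper uses the hypothesis $\beta\ge\tfrac12$ precisely in that inductive step (to guarantee $q^{-2\beta}\le\tfrac12$ so that such a $K$ exists). Your direct $q$-adic decomposition plus geometric sum avoids the induction entirely and, as you observe, works for any $\beta\in(0,1)$; it also produces an explicit constant $D$ more transparently. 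So your route is a bit cleaner and slightly more general.

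One point to tighten: the factorization $f(cq^j+r)=f(cq^j)f(r)$ that you write down is \emph{not} valid for all $0\le r<aq^j$ when $a>1$ and $c$ is not a multiple of $q$; for instance, for Thue--Morse with $q=2$, $c=1$, $j=0$, $r=1$ one gets $f(2)=-1\neq 1=f(1)^2$. The definition only gives $f(Aq^k+b)=f(Aq^k)f(b)$ for $0\le b<q^k$, so with offset divisible merely by $q^j$ you need $r<q^j$, not $r<aq^j$. The fix is exactly what you allude to in your ``main obstacle'' paragraph: either use single $q$-adic blocks $[kq^j,(k+1)q^j)$ throughout (there are at most $2(q-1)$ at each scale), or note that a block $cq^j+[0,aq^j)$ with $a\le q$ splits into $a$ such single blocks, on each of which the factorization is legitimate and produces a common factor $S_{q^j}(x)$. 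Either way one obtains the bound $O(q^{j\beta})$ per scale and the geometric sum goes through unchanged.
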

\begin{proof}
We simply write $ S_{n} $ for $ S_{n}^{f} $. For $ a \in \mathbb{N} $ and $ j \in \mathbb{N} $, the $ q $-multiplicity implies that
\begin{align*}
S_{(a+1)q^{j}}(x) - S_{a q^{j}}(x) = \sum_{n = 0 }^{q^{j} - 1} f(a q^{j} +n ) \, e((a q^{j}+ n)x)
= f(a q^{j})\, e(a q^{j})\, S_{q^{j}}(x) .
\end{align*}
Then, by the hypothesis, we get
\begin{equation}
\lvert S_{(a+1) q^{j}}(x) - S_{a q^{j}}(x) \rvert \leq {C} q^{j\beta}.
\label{eq:thm1eq1}
\end{equation}
For any couple $ (n, m) \in \mathbb{N}^{2}$ with $ 0 \leq m < n $, there exists a unique integer $ j \in \mathbb{N} $ such that $ q^{j} \leq n-m < q^{j+1} $. We are going to show, by induction on $ j \geq 0 $, that
\begin{equation}
\lvert S_{n}(x) - S_{m}(x) \rvert \leq Kq^{j \beta}
\label{eq:thm1eq2}
\end{equation}
for all $ (n, m)\in \mathbb{N}^{2} $ such that $ q^j\le n-m < q^{j+1} $, where $ K \geq q^3 $ is a constant to be determined.
Since $K\ge q^3$, \eqref{eq:thm1eq2} is trivially true for $j=0, 1, 2$.

First we prove a weaker version of \eqref{eq:thm1eq2} by induction on $ j \ge 1$: the inequality \eqref{eq:thm1eq2}
 holds for all $(n, m) \in \mathbb{N}^2$ such that $0\le n-m <q^{j+1}$ and that one of $n$ and $m$ is of the form $aq^{j-1}$
 for some $a\in \mathbb{N}$. We have seen that \eqref{eq:thm1eq2} holds for $j=1,2$ (initiation of induction). Now suppose that 
 \eqref{eq:thm1eq2} holds for $j=1, 2, \cdots, k$ (with $k\ge 2$).
 Assume $ 0\leq n-m < q^{k+2}$ and $m=aq^k$ for some $a\in \mathbb{N}$ (the reasoning is the same if $n=aq^k$). Let  $ \tau = b q^{k} \leq n $ be the integer with the largest $ b \in \mathbb{N} $. Then $ 0 \leq n - \tau < q^{k} $. 
 Notice that $\tau= aq^{k-2}$ with $a = b q^2$.  
By the  hypothesis of induction, we have
\begin{equation}
\vert S_{n}(x) - S_{\tau}(x) \vert 
\le K q^{(k-1)\beta} .
\label{eq:thm1eq3}
\end{equation}
We now estimate
\begin{align*}
\lvert S_{n}(x) - S_{m}(x) \rvert &\leq \lvert S_{n}(x) - S_{\tau}(x) \rvert 
+ \lvert S_{\tau}(x) - S_{m}(x) \rvert \\
&\leq K q^{(k-1)\beta} + \sum_{\ell=a}^{b-1}\lvert S_{(\ell+1)q^{k}}(x) - S_{\ell q^{k}}(x) \rvert \\
&\leq Kq^{(k-1)\beta} + (b-a)C q^{k\beta},
\end{align*}
where the second inequality follows from \eqref{eq:thm1eq3} and the third one follows from~\eqref{eq:thm1eq1}. 
Remark that
$ (b-a)q^{k} \leq n-m < q^{k+2}$, which implies $ b-a \leq q^{2} $. Therefore, we get
\[
\lvert S_{n}(x) - S_{m}(x) \rvert \leq (Kq^{-2\beta} + C q^{2-\beta}) q^{(k+1)\beta}.
\]
Since $ \beta \ge \frac{1}{2} $, we have $ q^{2\beta} \ge q \geq 2 $. We can take $ K \geq q^3 $ large enough so that $ Kq^{-2\beta} + Cq^{2-\beta} \leq K $. Actually, we can take any $K$ such that
\[
K\ge q^3, \quad K \ge  \frac{Cq^{1-\beta}}{1 - 2q^{-2\beta}} = \frac{Cq^{2+\beta}}{q^{2\beta} - 1}.
\]

Now let us deduce the inequality \eqref{eq:thm1eq2} from its weak version that we have just proved. Assume 
$q^j\le n-m <q^{j+1}$ with $j\ge 2$. Let $aq^{j-1}$ be an integer such that
$$
 \left| \frac{n+m}{2} - aq^{j-1}\right| < q^{j-1}.
$$
Then, writing $n - aq^{j-1} = (n - m)/2 + ((n+m)/2 - aq^{j-1})$, we get
$$
   0\le n - aq^{j-1} <\frac{1}{2} q^{j+1} +q^j <q^{j+1}.
$$
Similarly we have
$$
      0\le aq^{j-1} - m <q^{j+1}.
$$
Then, by the weak version of \eqref{eq:thm1eq2}, we get
$$
   |S_n(x) - S_m(x)| \le |S_n(x) - S_{aq^{j-1}}| + |S_{aq^{j-1}} - S_m(x)| \le 2 K q^{j \beta}.
$$

Finally we can take
$$
  D = 2 K =\max \left\{2q^3, \frac{2C q^{2+\beta}}{q^{2\beta}-1}\right\}.
$$
\end{proof}

The above proof can be simplified a little bit when $q\ge 3$.
\medskip



The Thue-Morse sequence $ (t_{n}) $ is defined by $ (-1)^{s_2(n)} $, where $ s_2(n) =\sum a_{i} $ is the sum of dyadic digits of $ n = \sum a_j 2^j$ ($a_j = 0, 1$).
The sequences $(t_n^{(c)})$ defined by $t_n^{(c) = ^{2\pi i c s_2(n)}}$ ($0<c<1$) are also studied in the literature. 
 They are  $ 2 $-multiplicative.  Recall that $t_n = t_n^{(1/2)}$.
 
 A.~O.~Gelfond  \cite{Gelfond} proved
\[
\bigg\lvert \sum_{n=0}^{N-1}t_{n}\mathrm{e}^{2\pi inx}  \bigg\rvert \leq C N^{\frac{\log3}{\log4}} \quad (\forall\, N\geq 1,\ \forall\,x\in\R )
\]
where $ C > 0 $ is a constant.

For a general $ q$-multiplicative sequence $ f(n) $ taking value in $ \mathbb{U} $, we have
\[
\bigg\lvert \sum_{n=0}^{N-1}f(n)\mathrm{e}^{2\pi inx} \bigg\rvert \leq CN^{\beta} \quad(\forall\, N\geq 1\ \forall\, x\in\R)
\]
under the condition
\begin{equation}\label{eq:condition-3}
\overline{\lim\limits_{\scriptsize m\to\infty}}\max\limits_{x\in \R}\lvert F_{m}(x)F_{m+1}(x)\cdots F_{m+r-1}(x)\rvert \leq q^{r\beta}
\tag{3}
\end{equation}
for some $ r \geq 2 $, where
\[
F_{m}(x) = \sum_{j=0}^{q-1}f(jq^{m})\mathrm{e}^{2\pi ijq^{m}x }.
\]
Using this idea with $r=2$, Mauduit, Rivat and S\'ark\"{o}zy \cite{MRS} proved that
\[
\bigg\lvert \sum_{n=0}^{N-1}t_{n}^{(c)}\mathrm{e}^{2\pi inx}  \bigg\rvert \leq C N^{1- \frac{\pi^2}{20\log2} \|c\|^2} \quad (\forall\, N\geq 1,\ \forall\,x\in\R )
\]
where $ \|c\|=\inf_{n \in \mathbb{Z}} |c-n| $. This estimate is not optimal, but it gives an interesting upper bound.

In \cite{Ko}, Konieczny proved that for the  Thue-Morse sequence $t$ and for any positive integer $s$, we have $\|t\|_{U^s[N]} = O(N^{-c}) $ for some $c:=c(s)>0$ where $\|\cdot\|_{U^s[N]}$
denotes the $s$-th Gowers uniformity norm. The Gowers uniform norm would be a good tool to study the properties of a given sequence which are examined in this note.

        \section{Random $q$-multiplicative sequences}
        Here we construct some random $q$-multiplicative sequences and prove that they have almost surely the Gelfond property. 
        A $q$-multiplicative sequence $(f(n))_{n\ge 0}$ is determined by its values 
        $f(0)=1$ and $(f(q^t), \cdots, f((q-1)q^t))$ for $t\ge 0$. Take a sequence of random vectors $X=(X_t)_{t \ge 0}$ where
        $$
           X_t = (X_t^{(1)}, \cdots, X_t^{(q-1)}).
        $$
Then we have a random $q$-multiplicative sequence $f_X(\cdot)$ defined by
$$
   f_X(n) =\prod X_{n_k}^{(\epsilon_k)}
   \ \ \ \ \mbox{\rm for}\ \ \ n =\sum_k \epsilon_k q^{n_k}.
$$

Let 
$$
      S_{X, N}(t) = \sum_{n=0}^{N-1} f_X(n) e^{2\pi i n t}.
$$
Then 
$$
   S_{X, q^n}(t) = \prod_{j=0}^{n-1} (1+X_j^{(1)}e^{2\pi i q^jt}+ \cdots + X_j^{(q-1)}e^{2\pi i (q-1)q^jt} ).
$$
The following theorem gives us  a upper bound for the Gelfond exponent of the random $q$-multiplicative sequence.  
        
\begin{thm}\label{R-q-M}
  Suppose that all the random variables $X_t^{(k)}$ ($t \ge 0, 1\le k\le q-1$) are independent and uniformly distributed
  on the circle, that means, $X_t^{(k)} = e^{2\pi i \omega_t^{(k)}}$  where $\omega_t^{(k)}$ are independent and uniformly distributed
  on $[0,1]$. Then there exists a constant $1/2\le \alpha <1$ such that almost surely
  $$
      \|S_{X, N} (\cdot)\|_\infty = O(N^\alpha)
  $$
\end{thm}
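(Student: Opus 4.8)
The plan is to prove the estimate first along the geometric subsequence $N=q^{n}$, i.e.\ that $\|S_{X,q^{n}}\|_{\infty}=O(q^{n\alpha})$ almost surely for a deterministic $\alpha\in[\frac12,1)$, and then to pass to arbitrary $N$ by the base-$q$ digit decomposition, exactly as in the proof of Theorem~\ref{SGthm}. Write
\[
P_{j}(t)=1+\sum_{k=1}^{q-1}X_{j}^{(k)}e^{2\pi i k q^{j}t},\qquad\text{so that}\qquad S_{X,q^{n}}(t)=\prod_{j=0}^{n-1}P_{j}(t).
\]
The first observation is that, \emph{for each fixed $t$}, the factors $(P_{j}(t))_{j\geq0}$ are i.i.d.: since the law of $X_{j}^{(k)}=e^{2\pi i\omega_{j}^{(k)}}$ is rotation invariant, each $X_{j}^{(k)}e^{2\pi i kq^{j}t}$ is again uniform on $\mathbb{U}$, and these are independent in $(j,k)$; hence $P_{j}(t)$ has the common law of $Z:=1+V_{1}+\cdots+V_{q-1}$ with $V_{k}$ i.i.d.\ uniform on $\mathbb{U}$. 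In particular $|Z|\leq q$ almost surely, $\mathbb{E}|Z|^{2}=q$, and, crucially, $\mathbb{P}(|Z|=q)=0$.

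The heart of the argument is a large deviation bound at a fixed frequency. Since $|Z|\leq q$, the function $\Lambda(\theta):=\log\mathbb{E}|Z|^{\theta}$ is finite for all $\theta\geq0$ and satisfies $\Lambda(\theta)<\theta\log q$ for $\theta>0$. For $\gamma<\log q$ set $I(\gamma):=\sup_{\theta\geq0}\bigl(\theta\gamma-\Lambda(\theta)\bigr)$. I claim that $I(\gamma)\to+\infty$ as $\gamma\to(\log q)^{-}$: if not, then $\Lambda(\theta)\geq\theta\log q-M$ for all $\theta\geq0$ along some $\gamma_{n}\uparrow\log q$, which is impossible because $\Lambda(\theta)-\theta\log q=\log\mathbb{E}\,e^{-\theta(\log q-\log|Z|)}\to\log\mathbb{P}(|Z|=q)=-\infty$ by dominated convergence. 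Fix once and for all $\alpha\in[\frac12,1)$ so close to $1$ that $I(\alpha\log q)>\log q$, and put $c:=I(\alpha\log q)-\log q>0$. By Chernoff's inequality applied to the i.i.d.\ sum $\log|S_{X,q^{n}}(t)|=\sum_{j<n}\log|P_{j}(t)|$,
\[
\mathbb{P}\bigl(|S_{X,q^{n}}(t)|>q^{n\alpha}\bigr)=\mathbb{P}\Bigl(\textstyle\sum_{j<n}\log|P_{j}(t)|>n\alpha\log q\Bigr)\leq e^{-nI(\alpha\log q)}=q^{-n}e^{-cn}\qquad\text{uniformly in }t.
\]

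Next I pass from a single frequency to the sup norm. The trigonometric polynomial $S_{X,q^{n}}$ has degree $<q^{n}$ and coefficients of modulus $1$, so by Bernstein's inequality $\|S_{X,q^{n}}\|_{\infty}\leq2\max_{t\in\Gamma_{n}}|S_{X,q^{n}}(t)|$ for a suitable equispaced net $\Gamma_{n}$ with $\#\Gamma_{n}=O(q^{n})$. A union bound over $\Gamma_{n}$ then gives
\[
\mathbb{P}\bigl(\|S_{X,q^{n}}\|_{\infty}>2q^{n\alpha}\bigr)\leq\#\Gamma_{n}\cdot\max_{t}\mathbb{P}\bigl(|S_{X,q^{n}}(t)|>q^{n\alpha}\bigr)=O\bigl(q^{n}\cdot q^{-n}e^{-cn}\bigr)=O(e^{-cn}),
\]
which is summable in $n$. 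By the Borel--Cantelli lemma, almost surely $\|S_{X,q^{n}}\|_{\infty}\leq2q^{n\alpha}$ for all large $n$; combined with the trivial bound $\|S_{X,q^{n}}\|_{\infty}\leq q^{n}$ this yields a random constant $C(\omega)$ with $\|S_{X,q^{n}}\|_{\infty}\leq C(\omega)q^{n\alpha}$ for all $n$. Finally, for general $N$ with base-$q$ expansion $N=\sum_{k=0}^{m}a_{k}q^{k}$ (so $q^{m}\leq N<q^{m+1}$), decomposing $[0,N)$ into the blocks $[bq^{k},(b+1)q^{k})$ prescribed by the digits and using, as in the proof of Theorem~\ref{SGthm}, that $\sum_{bq^{k}\leq n<(b+1)q^{k}}f_{X}(n)e^{2\pi i n t}=f_{X}(bq^{k})e^{2\pi i bq^{k}t}S_{X,q^{k}}(t)$, we obtain
\[
\|S_{X,N}\|_{\infty}\leq\sum_{k=0}^{m}a_{k}\,\|S_{X,q^{k}}\|_{\infty}\leq(q-1)C(\omega)\sum_{k=0}^{m}q^{k\alpha}=O(q^{m\alpha})=O(N^{\alpha})\qquad\text{a.s.},
\]
which is the assertion (with $\alpha\in[\frac12,1)$ as required, since $\alpha$ was taken close to $1$).

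\emph{The main obstacle.} Since the union bound runs over a net of size $\asymp q^{n}$, the fixed-frequency tail must beat $q^{-n}$, i.e.\ one needs the large deviation rate $I(\alpha\log q)$ to exceed $\log q$. A second moment (Chebyshev) estimate only gives $\mathbb{P}(|S_{X,q^{n}}(t)|>q^{n\alpha})\leq\mathbb{E}|S_{X,q^{n}}(t)|^{2}/q^{2n\alpha}=q^{n(1-2\alpha)}$, which overcomes the net only for $\alpha>1$, and likewise the criterion \eqref{eq:condition-3} is useless here because its upper limit equals $q^{r}$ almost surely. Pushing $\alpha$ below $1$ genuinely requires exploiting all moments of $|Z|$ and the blow-up $I(\gamma)\to\infty$ as $\gamma\to(\log q)^{-}$, which rests on the combination $|Z|\leq q$ and $\mathbb{P}(|Z|=q)=0$; establishing this blow-up and calibrating $\alpha$ to it is the crux of the proof, the remaining steps (rotation invariance, Bernstein discretization, Borel--Cantelli, digit decomposition) being routine.
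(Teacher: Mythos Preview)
Your proof is correct and, at the quantitative level, is exactly the paper's argument: both hinge on the moment function $\Lambda(\theta)=\log\mathbb{E}|Z|^{\theta}$ of $Z=1+V_{1}+\cdots+V_{q-1}$, and your threshold condition $I(\alpha\log q)>\log q$ is algebraically identical to the paper's tangency condition for $\tau^{*}$ (the paper writes $\Phi(\lambda)=-\log\varphi(\lambda)=\lambda\log q-\Lambda(\lambda)$ and needs $\Phi(\lambda)>(1+(1-\tau)\lambda)\log q$ for some $\lambda$, which unwinds to your inequality). The paper also appeals to $\Phi(+\infty)=+\infty$, i.e.\ $\Lambda(\theta)-\theta\log q\to-\infty$, without spelling it out; you actually justify this step via $\mathbb{P}(|Z|=q)=0$.

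The one genuine organizational difference is how Bernstein's inequality enters. The paper follows the Salem--Littlewood--Kahane route: Bernstein gives a random interval of length $\gtrsim q^{-n}$ on which $|S_{X,q^{n}}|\geq\tfrac12 M_{q^{n}}$, whence $M_{q^{n}}^{\lambda}\leq 2^{\lambda}q^{n}\int_{0}^{1}|S_{X,q^{n}}(t)|^{\lambda}\,dt$; one then takes expectation (using independence and rotation invariance to factor the integral) and applies Markov. You instead apply Chernoff pointwise, then use Bernstein to discretize to a net of size $O(q^{n})$ and take a union bound. Both devices cost exactly a factor $q^{n}$ and lead to the same Borel--Cantelli input, so the two proofs yield the same exponent. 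Your version is perhaps more elementary to read; the paper's version avoids the explicit net and gives directly a moment bound for $M_{q^{n}}$. You also carry out the digit decomposition from $N=q^{n}$ to general $N$, which the paper leaves implicit.
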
        

\begin{proof} 
We follow the idea of Salem and Littlewood developed by Kahane (see \cite{Kahane}).
Let $M_N =  \|S_{X, N} (\cdot)\|_\infty$. Observe that $S_{X, q^n}$ is a trigonometric polynomial 
of degree $q^n$. By Bernstein's inequality, 
we have 
$$\max_{0\le t\le 1} |S'_{X, q^n}(t)| \le q^n \max_{0\le t\le 1}|S_{X, q^n}(t)|.$$ It follows that 
$|S_{X, q^n}(t)|\ge \frac{1}{2} M_{q^n}$ over an random interval $I$ of length at least $q^{-n}$. Therefore
$$
          M_{q^n}^\lambda \le  2^\lambda \int_I |S_{X, q^n}(t)|^\lambda dt
               \le 2^{\lambda} q^n \int_0^1 |S_{X, q^n}(t)|^\lambda d t.
$$
It follows that
$$
    \mathbb{E} M_{q^n}^\lambda 
               \le 2^{\lambda} q^n \int_0^1 \mathbb{E} |S_{X, q^n}(t)|^\lambda d t = 2^\lambda [q^{1+\lambda} \varphi(\lambda)]^n
$$
where
$$
   \varphi(\lambda) = q^{-\lambda} \int_0^1\cdots \int_0^1 |1 + e^{2\pi i x_1}+\cdots + e^{2\pi i x_{q-1}}| dx_1 \cdots dx_{q-1}.
$$
Fix $\tau>0$. By Markov's inequality, we have
$$
   P(M_{q^n} \ge q^{n\tau}) \le 2^\lambda e^{n[(1+ (1-\tau)\lambda)\log q -\Phi(\lambda)]}
$$ 
where $$
\Phi(\lambda) = -\log \varphi(\lambda).$$
 Notice that $\Phi$ is increasing and concave, and $\Phi(0+)=0$ and $\Phi(+\infty)=+\infty$.
Let $\tau^*$ be such that $\lambda \mapsto [1 + (1-\tau^*)\lambda]\log q$ be a tangent of $\Phi$. In other words  the following system
 has a solution $(\tau^*, \lambda^*)$:
\begin{equation}\label{R1}
      (1+(1-\tau^*)\lambda^*) \log q - \Phi(\lambda^*)=0,
\end{equation}
\begin{equation}\label{R2}
       (1-\tau^*)\log q = \frac{\varphi'(\lambda^*)}{\varphi(\lambda^*)}.
\end{equation}
It is clear that $\tau^*<1$. So, for any $\tau \in (\tau^*, 1)$ we have
$$
      P(M_{q^n} \ge q^{n\tau}) \le 2^{\lambda^*} e^{ -\delta n} 
$$
where  $\delta= -[1+(1-\beta)\lambda^*]\log q + \Phi(\lambda^*)>0$. Then by Borel-Cantelli lemma, we have
$$
      a.s. \quad M_{q^n} = O(q^{\tau n}).
$$
This implies what we have to prove with $\alpha =\tau$.

\end{proof}

\section{Sequences realized by dynamical systems}
Let $(\Omega, \mathcal{A}, \nu, S)$ be a measure-preserving dynamical system. For a given function $h \in L^1(\nu)$, the sequence $(h(S^n \omega))$ is referred to as a random
sequence realized by the dynamical system $(\Omega, \mathcal{A}, \nu, S)$.  Under some condition on $h$, the sequence  $(h(S^n \omega))$ is almost surely of Gelfond type with
Gelfond exponent $\frac{1}{2}$ and is fully oscillating.  More precisely we have the following result. 

\begin{thm} \label{D-Gelfond} Let $(\Omega, \mathcal{A}, \nu, S)$ be a measure-preserving dynamical system. Suppose $h \in L^\infty(\nu)$ with $\int h d\nu=0$ and
\begin{equation}\label{HNcondition}
    \sum_{n=1}^\infty \| \mathbb{E}(h | S^{-n} \mathcal{A})\|_\infty<\infty.
\end{equation}  
Then for $\nu$-almost every $\omega$ and for any $d\ge 1$ we have
\begin{equation}\label{R-G}
\sup_{P\in \mathbb{R}_d[t]} \left|\sum_{n=0}^{N-1} h(S^n\omega)e^{2\pi i P(n)} \right| = O_{d,\omega}\left(\sqrt{N \log N} \right).
\end{equation}
\end{thm}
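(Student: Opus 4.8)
The plan is to bound the supremum over all real polynomials of degree $\le d$ by a single random variable that we control via a second‑moment (variance) computation combined with a Davenport–Erd\H{o}s–LeVeque‑type summation argument, and then a discretization of the polynomial coefficients to pass from a pointwise estimate to the uniform estimate over $P \in \mathbb{R}_d[t]$. First I would reduce the ``sup over $P$'' to a countable family: by Weyl's inequality / elementary Fourier estimates, if $P, Q$ are real polynomials of degree $\le d$ whose coefficients agree up to error $\varepsilon$ on $[0,N]$, then $|\sum_{n<N} h(S^n\omega)(e^{2\pi i P(n)} - e^{2\pi i Q(n)})| = O(\|h\|_\infty N \cdot N^d \varepsilon)$, so a net of mesh $\varepsilon \sim N^{-d-2}$ in the coefficient space (restricted to a fundamental domain, since only the fractional parts of the coefficients matter modulo lower‑order corrections) has polynomially many points $N^{O(d)}$. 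Hence it suffices to prove, for each fixed real polynomial $P$, an estimate $|\sum_{n<N} h(S^n\omega) e^{2\pi i P(n)}| = O(\sqrt{N\log N})$ with a failure probability summable against the size $N^{O(d)}$ of the net — i.e. a bound of the form $\nu\{\,|\sum_{n<N} h(S^n\omega)e^{2\pi i P(n)}| > \lambda\sqrt{N\log N}\,\} \le N^{-A}$ for $A$ as large as we like once $\lambda$ is large.

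The key input for the fixed‑$P$ estimate is the hypothesis \eqref{HNcondition}, which says $h$ is very well approximated by functions measurable with respect to $S^{-n}\mathcal A$ decaying in $\ell^1(\|\cdot\|_\infty)$; this is a ``reverse martingale / mixing'' condition that gives summable correlation decay. Concretely, writing $c_k = \|\mathbb E(h\mid S^{-k}\mathcal A)\|_\infty$, one gets $|\int h \cdot (h\circ S^k)\,d\nu| \le \|h\|_\infty c_k$ (condition on $S^{-k}\mathcal A$ and use that $h\circ S^k$ is $S^{-k}\mathcal A$‑measurable while $\mathbb E(h\mid S^{-k}\mathcal A)$ is the projection), and more generally the shifted products $h\circ S^m \cdot \overline{h\circ S^n}$ have expectation $O(c_{|m-n|})$. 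Therefore
\[
\int \left|\sum_{n=0}^{N-1} h(S^n\omega) e^{2\pi i P(n)}\right|^2 d\nu = \sum_{m,n} e^{2\pi i(P(m)-P(n))}\int h\circ S^m\,\overline{h\circ S^n}\,d\nu = O\!\left(N \sum_{k\ge 0} c_k\right) = O(N),
\]
uniformly in $P$. This is the analogue of the spectral estimate used in Theorem~\ref{WET}; in fact one can phrase \eqref{HNcondition} as saying that the spectral measure $\sigma_h$ has a bounded density, after which the variance bound is immediate. The same computation applied to blocks $\sum_{a\le n<b}$ gives $\int|\sum_{a\le n<b} h(S^n\omega)e^{2\pi iP(n)}|^2 d\nu = O(b-a)$, i.e. the super‑additive majorant $g_{a,b} = \mathrm{const}\cdot(b-a)$, so one can invoke the maximal inequality of Móricz (exactly as in the proof of Theorem~\ref{thm:2}, with $\gamma=2$ and now $\beta$ effectively $\tfrac12$ but with the linear majorant) to obtain $\int \max_{a\le b'\le b}|\sum_{a\le n<b'} h(S^n\omega)e^{2\pi iP(n)}|^2 d\nu = O((b-a)\log^2(b-a))$. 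Coupled with dyadic decomposition this yields, for fixed $P$, $\sum_{n<N} h(S^n\omega)e^{2\pi iP(n)} = O_\omega(\sqrt{N}\,\log^{3/2+\delta}N)$ almost surely — but for the union bound over the net I instead want the tail bound directly, which follows from Markov applied to the $L^2$ (or higher $L^{2p}$, using that $h\in L^\infty$ so all moments of the normalized sum are $O(1)$) estimate: $\nu(|\sum| > \lambda\sqrt{N\log N}) \le C/(\lambda^2\log N)$ from $L^2$ is not summable against $N^{O(d)}$, so I would run the computation in $L^{2p}$ for $p$ large — $h\in L^\infty$ makes the $2p$‑th moment of $\frac{1}{\sqrt N}\sum h(S^n\omega)e^{2\pi iP(n)}$ bounded by $O_p(1)$ using the same correlation‑decay counting of $2p$‑fold sums — giving $\nu(|\sum|>\lambda\sqrt N) = O(\lambda^{-2p})$, which with $\lambda \sim \sqrt{\log N}$ beats any polynomial in $N$ once $p > A/2$.

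Assembling: fix the (large) net of $N^{O(d)}$ polynomials for each dyadic scale $N = 2^j$; by the $L^{2p}$‑tail bound and Borel–Cantelli, almost surely for all large $j$ and all $P$ in the net at scale $2^j$ we have $|\sum_{n<2^j} h(S^n\omega)e^{2\pi iP(n)}| \le \lambda\sqrt{2^j\log 2^j}$; the net approximation then extends this to all real $P$ of degree $\le d$ at dyadic $N$; finally a routine interpolation between consecutive dyadic scales (the difference of two partial sums is controlled by the maximal function over a dyadic block, itself handled by the Móricz estimate above) removes the restriction to dyadic $N$, yielding \eqref{R-G} with the stated $\sqrt{N\log N}$. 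The main obstacle is the interplay between the \emph{uniformity in $P$} and the \emph{sharp $\sqrt{N\log N}$ bound}: a naive union bound costs an extra $\log N^{O(d)} = O(\log N)$ inside the square root, which would give $\sqrt{N}\log N$ rather than $\sqrt{N\log N}$, so one must be careful that the net size enters only through the \emph{exponent} $p$ in the moment bound (which is a fixed constant depending on $d$) and not as a multiplicative $\log$‑loss — equivalently, that the per‑polynomial tail is genuinely stretched‑exponential‑ish in $\lambda$, which is exactly what the $L^\infty$ assumption on $h$ buys via arbitrarily high moments. A secondary technical point is verifying that only finitely many coefficients of $P$ matter modulo $1$ up to the allowed error, so that the net is honestly finite of polynomial size; this is standard but needs the estimate on $|e^{2\pi iP(n)} - e^{2\pi iQ(n)}|$ on the range $0\le n<N$ to be carried out with the degree‑$d$ dependence made explicit.
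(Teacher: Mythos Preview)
Your overall architecture (discretize $\mathbb{R}_d[t]$ into a net of size $N^{O(d)}$, prove a uniform tail bound for each fixed $P$, union bound, Borel--Cantelli along dyadic scales, interpolate) is exactly the scheme the paper has in mind; the paper just cites its earlier Theorem~2 in \cite{F} for this part. The genuine gap is in your tail-bound input. From $\mathbb{E}\,|\sum_{n<N}h(S^n\omega)e^{2\pi iP(n)}|^{2p}=O_p(N^p)$ and Markov you get $\nu(|\sum|>\lambda\sqrt{N})=O(\lambda^{-2p})$, and with $\lambda\sim\sqrt{\log N}$ this is $(\log N)^{-p}$, \emph{not} $N^{-A}$; for fixed $p$ this does not even come close to beating a net of size $N^{O(d)}$, let alone being summable in $N$. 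So the sentence ``which with $\lambda\sim\sqrt{\log N}$ beats any polynomial in $N$ once $p>A/2$'' is simply false. To make the moment method work you would have to let $p$ grow like $\log N$ and simultaneously control the constant in the $2p$-th moment as $C_p\le (Cp)^p$; but establishing that growth rate is exactly equivalent to proving a subgaussian bound, and it does not follow from the bare correlation estimate $|\int h\cdot h\circ S^k|\le\|h\|_\infty c_k$ by combinatorics alone.

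The paper's key point, which your sketch is missing, is that condition~\eqref{HNcondition} yields the \emph{subgaussian} inequality
\[
\mathbb{E}\exp\Bigl(\lambda\Bigl|\sum_n a_n h(S^n\omega)\Bigr|\Bigr)\le \exp\Bigl(c\lambda^2\sum_n|a_n|^2\Bigr)
\]
for all $(a_n)\in\ell^2$ (proved in \cite{F0} via a martingale/Azuma-type argument using the decomposition $h=\sum_k\bigl(\mathbb{E}(h\mid S^{-k}\mathcal{A})-\mathbb{E}(h\mid S^{-k-1}\mathcal{A})\bigr)$). Taking $a_n=e^{2\pi iP(n)}\mathbf{1}_{n<N}$ gives the Gaussian tail $\nu(|\sum|>\lambda\sqrt{N})\le 2\exp(-\lambda^2/4c)$, hence $\nu(|\sum|>\lambda\sqrt{N\log N})\le 2N^{-\lambda^2/4c}$, which \emph{does} beat $N^{O(d)}$ once $\lambda$ is a large constant depending on $d$. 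This is the missing ingredient; once you have it, your net/Borel--Cantelli plan goes through and gives exactly $\sqrt{N\log N}$ with no spurious extra logs.
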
  

\begin{proof}
We can repeat the proof of Theorem 2 in \cite{F}. But the key point that we should pointed out for the situation considered here  is that under the condition (\ref{HNcondition}), the sequence $(h(T^n \omega))$
shares the following subgaussian property: for any $(a_n) \in \ell^2(\mathbb{N})$ we have 
\begin{equation}\label{subg} 
          \mathbb{E} \exp\left(\lambda \left|\sum_{n=1}^\infty a_n h(S^n \omega)\right|\right) \le \exp \left(c \lambda^2 \sum_{n=1}^\infty |a_n|^2\right)
\end{equation}
where $c>0$ is a constant  depending on $h$. This inequality (\ref{subg}) was proved in \cite{F0} (see Lemma 2.5 and the proof of Theorem 2.6 in \cite{F0}).
\end{proof}

Recall that $\mathbb{E }(h | S^{-n} \mathcal{A}) = L^n h$ where $L$ is the Perron-Frobenius operator of the dynamical system which is defined by 
$$
        \int  f L h d\nu = \int f\circ S h d\nu \quad  (\forall f\in L^1(\nu), \forall h \in L^\infty(\nu).
$$
The decay of $\|L^n h\|_\infty$ was well studied and we know that the condition (\ref{HNcondition})  is satisfied by many dynamical systems and many
"regular" functions $h$.      	See for example, \cite{Baladi} and \cite{FJ0}.

The subgaussian property (\ref{subg}) was proved for more general random sequences in \cite{CF}. So these  random sequences also share the same property
stated  in Theorem \ref{D-Gelfond}.

Let us mention the following results due to Lesigne \cite{L1993} describing conditions for $h(S^n \omega)$ to be almost surely oscillating of a given order.
The conditions are expressed by the quasi-discrete spectrum  of the system.  Let  $(\Omega, \mathcal{A}, \nu, S)$ be an ergodic  measure-preserving dynamical system. The isometry
$g \mapsto g\circ S$ on $L^2(\nu)$ is still denoted by $S$. Let $E_0(S)$ be the set of  eigenvalues of $S$. For any $k\ge 1$, we inductively define
$$
    E_k(S) := \{g \in L^2(\nu): |g|=1 \ \ \nu\!\!-\!\!a.e., g\circ S \cdot \overline{g} \in E_{k-1}(S)\}.
$$
The set $E_1(S)$ is essentially the set of eigenfunctions of $S$ (up to multiplicative constants). Functions in $E_k(S)$ are the $k$-th generalized eigenfunctions of $S$. 
The sequence $E_k(S)$ is decreasing. Let 
$$
     E_\infty(S) = \bigcup_{k\ge 0} E_k(S).
$$
For $h\in L^1(\nu)$, by $f \in E_k(S)^\perp$ we mean $\int h gd\nu =0$ for all $g\in E_k(S)$.
The following results are proved by Lesigne in \cite{L1993}. We state them in terms of the notion of oscillation :
\begin{itemize}
\item[(i)] Assume that $(\Omega, \mathcal{A}, \nu, S)$ is  totally ergodic and $h\in L^1(\nu)$. The sequence $(h(S^n \omega))$ is almost surely oscillating of order $d$
if and only if $f \in E_d(S)^\perp$.
\item[(ii)] Assume that $(\Omega, \mathcal{A}, \nu, S)$ is  weakly mixing (i.e. $1$ is the only eigenvalue). For any $h\in L^1(\nu)$ with $\int h d\nu=0$, the sequence $(h(S^n \omega))$ is almost surely fully oscillating. Moreover
almost surely for any $d\ge 1$,
\begin{equation}\label{L-Osc}
   \lim_{N\to \infty}\sup_{P\in \mathbb{R}_d[t]}\left| \frac{1}{N} \sum_{n=1}^N h(S^n \omega) e^{2\pi i P(n)}\right|=0.
\end{equation}

\end{itemize}

 The weak mixing condition means  $E_1(S) = U$, the set of complex numbers of modulus $1$. In other words, $E_\infty(S) = U$.
 In this case, the condition $\int h d\nu =0$ means $h \in E_\infty(S)^\perp$. 
 
 According to Abramov \cite{Abramov}, we say that $(\Omega, \mathcal{A}, \nu, S)$ has {\em quasi-discrete spectrum} if $E_\infty(S)^\perp = \{0\}$. From (i) we deduce
 \begin{itemize}
\item[(iii)] If $(\Omega, \mathcal{A}, \nu, S)$ is  totally ergodic and has quasi discrete spectrum, then there is no  $h\in L^1(\nu)$ with $h\not=0$ such that  $(h(S^n \omega))$ is almost surely fully oscillating.
\item[(iv)] Assume that $(\Omega, \mathcal{A}, \nu, S)$  is  totally ergodic and doesn't have quasi discrete spectrum.  For any $h\in E_\infty(S)^\perp $, the sequence $(h(S^n \omega))$ is almost surely fully oscillating. 
\item [(v)] There are nilsequences which are fully oscillating. In fact,  let us consider a Heisenberg  nilrotation $(X, S)$. Take a  function $h\in L^2$ which is orthogonal to the quasi-discrete part, then  for a.e. $x$
 the nilsequence $h(S^n x)$ is fully oscillating.  
\end{itemize}

Furthermore, we can take a continuous  function $h$ in (v) which is orthogonal to the quasi-discrete part. Then  for every  $x$
 the nilsequence $h(S^n x)$ is fully oscillating, by the topological polynomial Wiener-Wintner theorem proved in \cite{F3}.

The property (\ref{L-Osc}) is to be compared with (\ref{R-G}). The weaker property (\ref{L-Osc}) holds for all $h\in L^1(\nu)$ with $\int h d\nu=0$.  and the property (\ref{R-G})
is stronger but requires more regularity on $h$.

\medskip

\noindent {\em Acknowledgement.}  I would like to thank M. Lemanczyk and Ch. Mauduit  for valuable discussions and useful references.


\begin{thebibliography}{99}
\bibitem{AKLR}
H. El Abdalaoui, J. Kulaga-Przymus, M. Lemanczyk and T. de la Rue, 
{\em The
Chowla and the Sarnak conjectures from ergodic theory point of view.} 
Vol. 37, No. 6 (2017)  2899-2944.

\bibitem{Abramov}
L. M. Abramov, {\em Metric automorphisms with quasi-discrete spectrum}, Izv. Akad. Nauk. U. S. S. R., 26 (1962), 513-530.

\bibitem{Baladi} V. Baladi,
{\em Positive Transfer Operators and Decay Of Correlations},  World Scientific Publishing Company, 2000.  

\bibitem{Bourgain}
J. Bourgain, {\em Pointwise ergodic theorem for arithmetic sets}, Publications Math\'ematiques de l'IHES,  tom. 69 (1989), 5-41.

\bibitem{CL} G. Cohen and M. Lin, {\em Extensions of the Menchoff-Rademacher theorem with
applications to ergodic theory}, Israel J. Math., 148 no. 1 (2005), 41-86.
\bibitem{Cuny}
C. Cuny, {\em On the a.s. convergence of the one-sided ergodic Hilbert transform},
Ergod. Th. Dynam. Sys. 29 (2009), 1781-1788.

\bibitem{CF}
C. Cuny and A. H. Fan, {\em Study of  almost everywhere convergence of series by means of martingale methods},
Stoc. Proc. Appl., to appear.

\bibitem{Davenport}
H. Davenport, {\em On some infinite series involving arithmetical functions (II)},
Quart. J. Math. Oxford, 8 (1937), 313-320.
\bibitem{DEL}
H. Davenport, P. Erd\"{o}s and W. J. LeVeque, {\em On Weyl's criterion for uniform
distribution}, Michigan M. J., 10 (1963), 311-314.
\bibitem{DS}
 F. Durand and D. Schneider, {\em Ergodic averages with deterministic weights}, Ann.
Inst. Fourier, 52, no. 2 (2002), 561-583.

\bibitem{Eisner} T. Eisner, {\em A polynomial version of Sarnak's conjecture}, 2015
ArXiv 1501.04323v3


\bibitem{F0}  A. H. Fan, {\em Almost everywhere convergence of ergodic series}, Ergod. Th. \& Dynam. Syst., 37 (2) 2017, 490-511.

\bibitem{F} A. H. Fan, {\em Oscillating Sequences of higher orders and topological systems of
quasi-discrete spectrum}, preprint 2016.
\bibitem{F2} A. H. Fan, {\em Fully oscillating sequences and weighted multiple ergodic limit}, Comptes Rendus de l'Acad\'emie des Sciences, Paris, to appear.

\bibitem{F3} A. H. Fan, {\em Topological polynomial Wiener-Wintner theorem}, preprint 2016.

\bibitem{FJ0}
A. H. Fan and Y. P. Jiang,
{\em On Ruelle-Perron-Frobenius operators II}, Commun. Math. Phys.,Vol. 223, Issue 1,
(2001) 143-159.

\bibitem{FJ}
A. H. Fan and Y. P. Jiang, {\em Oscillating Sequences, Minimal Mean Attractability
and Minimal Mean-Lyapunov-Stability}, Erg. Th. Dynam. Syst., to appear.
\bibitem{Gelfond}
A. O. Gelfond, {\em Sur les nombres qui ont des propri\'et\'es additives et multiplicatives
donn\'ees}, Acta Arithmetica, 13 (1968), 259-265.

\bibitem{HP0} F. Hahn and W. Parry,\,\emph{Minimal dynamical systems with quasi-discrete spectrum}. 
J. London Math. Soc., \textbf{40}\,(1965), 309-323.

\bibitem{Kahane}
J. P. Kahane, {\em Some random series of functions}, Cambridge University Press,
1985.
\bibitem{Krengel}
U. Krengel, {\em Ergodic theorems}, Walter de Gruyter, Berlin, New York, 1982.

\bibitem{Ko} J. Konieczny, {\em Gowers norms for the Thue-Morse and Rudin-Shapiro sequences},
ArXiv: 1611.09985, 2016.


\bibitem{L1993}
E. Lesigne, {\em Spectre quasi-discret et th\'eor\`eme ergodique de Wiener-Wintner pour les polym\^omes},
Ergod. Th. \& Dynam. Syst., 13 (1993), 676-684.
\bibitem{LM}
E. Lesigne and Ch. Mauduit, {\em Propri\'et\'es ergodiques des suites q-multiplicatives},
Compositio Mathematica, 100 (1996), 131-169.
\bibitem{LMM}
E. Lesigne, Ch. Mauduit and B. Moss\'e, {\em Le th\'eor\`eme ergodique le long d'une
suite q-multiplicative}, Compositio Mathematica, 93 (1994), 49-79.
\bibitem{MRS}
Ch. Mauduit, J. Rivat and A. S\'ark\"{o}zy, {\em On digits of sumsets}, Canadian J. Math.,
on line (2016).
\bibitem{Moricz}
 F. M\'oricz, M{\em oment inequalities and the strong law of large numbers}, Z
Wahrschein- lichkeitstheorie und Verw. Gebiete 35 (1976), 299-314.
\bibitem{Sarnak}
P. Sarnak, {\em Three lectures on the M\"{o}bius function}, randomness and dynamics,
IAS Lecture Notes, 2009;\\
http://publications.ias.edu/sites/default/files/MobiusFunctionsLectures(2).pdf.
\bibitem{Sarnak2}
 P. Sarnak, {\em M\"{o}bius randomness and dynamics}, Not. S. Afr. Math. Soc. 43
(2012), 89-97.
\end{thebibliography}

\end{document}